\date{}
\newtheorem{defi}{Definition}
\newtheorem{prop}{Proposition}
\newtheorem{theo}{Theorem}
\newtheorem{rem}{Remark}
\newtheorem{cor}{Corollary}
\newtheorem{lem}{Lemma}
\begin{document}


\title{
      \textbf{Rough sets semantics for the three-valued extension of first-order Priest's da Costa logic}
}

\author{
	{\large  \textsc{Jos\'e Luis Castiglioni}}\thanks{jlc@mate.unlp.edu.ar}\\
	{\small Universidad Nacional de La Plata, La Plata, Argentina} \\
	{\large \textsc{Rodolfo C. Ertola-Biraben}}\thanks{rcertola@unicamp.br} \\
	{\small Universidade Estadual de Campinas, Campinas, Brazil} \\
}

\maketitle


\textbf{Keywords:} three-valued logics, rough sets, Kripke models
\vspace{1cm}


\begin{abstract}
We provide a rough sets semantics for the three-valued extension of first-order Priest's da Costa logic, which we studied in [Castiglioni, J.L. and Ertola-Biraben, R.C. Modalities combining two negations. {\em Journal of Logic and Computation} 11:341--356, 2024]. 
This semantics follows the usual pattern of the semantics for first-order classical logic.
\end{abstract}

\section{Introduction}

In this paper we will consider the first-order logic {\bf{ID$_3$}} whose language consists in a denumerable set of individual variables and a non-empty set of $n$-ary relation letters, connectives $\{\land, \lor, \neg, D, \bot \}$ with arity $(2, 2, 1, 1, 0)$ and quantifiers $\forall$ and $\exists$. 
The connective $D$ stands for the dual of intuitionistic negation, using the notion of duality in the sense already present in \cite{ES}. 
Formulas are defined as usual.
In \cite{MCTN} we studied the same three-valued first-order logic extended with propositional letters in its language, where it is called R+S+cS.

In this paper we present a rough sets semantics for {\bf{ID$_3$}}.

We start Section 2 presenting {\bf{ID$_3$}}. 
Afterwards, we give a logic, which is equivalent by translation to {\bf{ID$_3$}}, in the propositional language $\{\land, \lor, \neg, \Box, \bot \}$. 
As expected, the formula $\Box \alpha$ may be read as ``$\alpha$ is necessary".

In Section 3 we recall the algebraic semantics and Kripke models for {\bf{ID$_3$}}, which will be used in the next sections.

For the reader not acquainted with rough sets, Section 4 begins stating the basic information in order to render this paper self-contained. 
Afterwards, we introduce the announced rough set semantics.

Relating the rough set semantics with the Kripke models, in Section 5 we prove soundness and completeness of {\bf{ID$_3$}} using the results of soundness and completeness proved in \cite{MCTN}.

\section{The logic {\bf{ID$_3$}}}

The following are the usual Gentzen Natural Deduction rules for conjunction and disjunction (see \cite[p. 186]{GG}):

\begin{center}
 \AxiomC{$\alpha$}
 \AxiomC{$\beta$}
 \LeftLabel{{\bf($\land$I)}\ }
 \RightLabel{,}
 \BinaryInfC{$\alpha \land \beta$}
 \DisplayProof \ \ 
 \AxiomC{$\alpha \land \beta$}
 \LeftLabel{{\bf($\land$E$_l$)}\ }
 \RightLabel{,}
 \UnaryInfC{$\alpha$}
 \DisplayProof \ \ 
\AxiomC{$\alpha \land \beta$}
 \LeftLabel{{\bf($\land$E$_r$)}\ }
 \RightLabel{,}
 \UnaryInfC{$\beta$}
 \DisplayProof
\end{center}

\begin{center}
  \AxiomC{$\alpha$}
 \LeftLabel{{\bf($\lor$I$_l$)}\ }
 \RightLabel{,}
 \UnaryInfC{$\alpha \lor \beta$}
 \DisplayProof \ \ 
\AxiomC{$\beta$}
 \LeftLabel{{\bf($\lor$I$_r$)}\ }
 \RightLabel{,}
 \UnaryInfC{$\alpha \lor \beta$}
 \DisplayProof
 \AxiomC {$\alpha \lor \beta$}
 \AxiomC {[$\alpha$]}
 \noLine
 \UnaryInfC {$\gamma$}
 \AxiomC {[$\beta$]}
 \noLine
 \UnaryInfC{$\gamma$}
 \LeftLabel{{\bf($\lor$E)}\ }
 \RightLabel{.} 
 \TrinaryInfC{$\gamma$}
\DisplayProof \ \ 
 \end{center}

The usual Gentzen Natural Deduction rules for intuitionistic negation are as follows:

\begin{center}
\AxiomC {[$\alpha$]}
 \noLine
 \UnaryInfC {$\bot$}
  \LeftLabel{{\bf($\neg$I)}}
 \RightLabel{,}
 \UnaryInfC{$\neg \alpha$}
 \DisplayProof \ \ 
 \AxiomC{$\alpha$}
 \AxiomC{$\neg \alpha$}
 \LeftLabel{{\bf($\neg$E)}}
 \RightLabel{,}
 \BinaryInfC{$\bot$}
 \DisplayProof
 \AxiomC{$\bot$}
 \LeftLabel{(EASQ)}
 \RightLabel{.}
 \UnaryInfC {$\alpha$}
 \DisplayProof
\end{center}

We will use the following rules for the dual of intuitionistic negation (see \cite[p. 172]{GP}):

\begin{center}
 \AxiomC{}
 \LeftLabel{{\bf($D$I)\ }}
 \RightLabel{}
 \UnaryInfC{$\alpha \lor D\alpha$}
 \DisplayProof \ \ and \ \ 
 \AxiomC{$D\alpha$}
 \AxiomC{}
 \UnaryInfC{$\alpha \lor \beta$}
 \LeftLabel{{\bf($D$E) \ }} \ 
 \RightLabel{.}
 \BinaryInfC{$\beta$}
 \DisplayProof
\end{center}

\begin{rem}
The given logic with also the usual rules for the conditional appears in \cite{GP}, \cite{SP}, and \cite{MCTN}. 
There is a previous version in \cite{GM} where only derivable formulas are considered (see also \cite{DOW}).
In \cite[p. 26]{RZ} there appears the suggestion to read $\neg$ and $D$ as ``it is false that" and ``not", respectively.
\end{rem}

\begin{rem}
There is a similar system in \cite{AMK}, where the authors use the symbol $+$ for the dual of intuitionistic negation and rules (+I) and (+E), that is, 

 \AxiomC{$D \vdash T$}
 \AxiomC{$A \vdash C$}
 \LeftLabel{}
 \RightLabel{}
 \BinaryInfC{$D \vdash  + A$}
 \DisplayProof  \ and  \ \ 
 \AxiomC{$\Gamma \vdash + A$}
 \AxiomC{$\Gamma, T \vdash A$}
 \LeftLabel{} \ 
 \RightLabel{,}
 \BinaryInfC{$\Gamma \vdash B$}
 \DisplayProof

respectively (note that in the rule (+I) the letter $D$ is used as a condition). 
Moreover, the authors remark that `` the condition in (+I), namely $D$ in the premise $D \vdash T$ and in the consequent $D \vdash + A$ and $A$ in the premise $A \vdash C$ must be a single formula, not a set of formulas, is crucial to our formalization."
\end{rem}

In order to obtain the extension we are interested in, we add the following rules:

\begin{center}
\AxiomC {}
 \LeftLabel{{\bf(S)}}
 \RightLabel{,}
 \UnaryInfC{$\neg \alpha \lor \neg \neg \alpha$}
 \DisplayProof \
 \AxiomC{$D \alpha$}
 \AxiomC{$D D \alpha$}
 \LeftLabel{{\bf(cS)}}
 \RightLabel{\ and} 
 \BinaryInfC{$\bot$}
 \DisplayProof
 \AxiomC{$\alpha \ \ D\alpha$}
 \LeftLabel{{\bf{(Reg)}}}
 \RightLabel{.}
 \UnaryInfC {$\beta \lor \neg \beta$}
 \DisplayProof
\end{center}

\noindent Equivalently, instead of the rules (S) and (cS), it is possible to use the rules

\begin{center}
 \AxiomC{$D \neg \alpha$}
 \LeftLabel{{\bf(S')}}
 \RightLabel{\ and} 
 \UnaryInfC{$\neg \neg \alpha$}
 \DisplayProof \
 \AxiomC{$DD \alpha$}
 \LeftLabel{{\bf(cS')}}
 \RightLabel{, respectively.} 
 \UnaryInfC{$\neg D \alpha$}
 \DisplayProof
\end{center}

\noindent Note that either (S) or (cS) imply both that $\neg D \alpha \vdash \neg D \neg D \alpha$ and $D\neg D\neg \alpha \vdash D\neg \alpha$.

Also, due to (cS), instead of (DE), it is possible to use either the rule 

\begin{center} 
\AxiomC{$D\alpha$}
  \AxiomC{}
 \UnaryInfC{$\alpha$}
 \LeftLabel{{\bf{($D$E$'$)}}}
 \RightLabel{}
 \BinaryInfC{$\bot$}
 \DisplayProof \ 
 or the rule
 \AxiomC{}
 \UnaryInfC{$\alpha$}
 
 \RightLabel{.}
 \UnaryInfC{$\neg D \alpha$}
 \DisplayProof \
\end{center}

We will use the symbol $\vdash$ in the context $\Gamma \vdash \alpha$ (where $\Gamma$ is a set of formulas and $\alpha$ is a formula) with the usual meaning, that is, indicating the existence of at least one derivation of $\alpha$ from $\Gamma$. 
We will not add a subscript to the symbol $\vdash$ as the context will make clear what logic is being meant.

\begin{lem} \label{DND}
Let $\alpha$ be any formula. 
Then, $\neg \neg D \alpha \dashv \vdash D\alpha$.
\end{lem}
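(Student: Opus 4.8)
The plan is to establish the two entailments separately. The direction $D\alpha \vdash \neg\neg D\alpha$ is purely intuitionistic and needs nothing but $(\neg\text{I})$ and $(\neg\text{E})$: assuming $\neg D\alpha$, the pair $D\alpha$ (the hypothesis) and $\neg D\alpha$ yields $\bot$ by $(\neg\text{E})$, and discharging $\neg D\alpha$ via $(\neg\text{I})$ gives $\neg\neg D\alpha$. (Indeed $\beta \vdash \neg\neg\beta$ holds for every formula $\beta$; here $\beta = D\alpha$.)

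For the converse $\neg\neg D\alpha \vdash D\alpha$ — which is genuinely non-intuitionistic, so it must invoke the extra rules — the key move is to apply $(D\text{I})$ not to $\alpha$ but to the formula $D\alpha$, obtaining the theorem $\vdash D\alpha \lor DD\alpha$. Then I would perform a case analysis by $(\lor\text{E})$ on $D\alpha \lor DD\alpha$: in the left case $D\alpha$ is immediate (it is the case assumption itself); in the right case I have $DD\alpha$, and $(\text{cS}')$ (equivalently, $(\neg\text{I})$ applied to $(\text{cS})$ with $\gamma := \alpha$) gives $\neg D\alpha$, which together with the standing hypothesis $\neg\neg D\alpha$ produces $\bot$ by $(\neg\text{E})$, whence $D\alpha$ by (EASQ). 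Discharging the two case-assumptions, $(\lor\text{E})$ delivers $D\alpha$ from the single remaining open assumption $\neg\neg D\alpha$, so $\neg\neg D\alpha \vdash D\alpha$. Combining the two entailments gives $\neg\neg D\alpha \dashv\vdash D\alpha$.

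I expect the only real obstacle to be spotting the right instance of $(D\text{I})$, namely $D\alpha \lor DD\alpha$ rather than $\alpha \lor D\alpha$. Once one has this disjunction available and observes that $DD\alpha$ is incompatible with $D\alpha$ (the content of $(\text{cS})$/$(\text{cS}')$), the rest is a routine combination of $(\lor\text{E})$, $(\neg\text{E})$, and (EASQ); no use of $(\text{S})$, $(\text{Reg})$, or the quantifier rules is needed.
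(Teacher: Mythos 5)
Your proof is correct and follows essentially the same route as the paper: the non-trivial direction is obtained by instantiating $(D\mathrm{I})$ at $D\alpha$ to get $D\alpha \lor DD\alpha$, handling the right disjunct by deriving $\neg D\alpha$ from $DD\alpha$ via (cS)/(cS$'$), contradicting $\neg\neg D\alpha$, and closing with (EASQ) and $(\lor\mathrm{E})$. The paper's displayed derivation merely unfolds (cS$'$) into (cS) plus an extra $(\neg\mathrm{I})$ step inside the right branch; the idea is identical.
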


\begin{proof}
The proof for $D \alpha \vdash \neg \neg D\alpha$ is straightforward. 
For the other direction, consider the following derivation:

\begin{center} 
 \AxiomC{}
 \RightLabel{($D$I)}
 \UnaryInfC{$D\alpha \lor DD\alpha$}
 \AxiomC{1}
 \noLine
 \UnaryInfC{$D\alpha$}
 \AxiomC{1}
 \noLine
 \UnaryInfC{$DD\alpha$}
 \AxiomC{2}
 \noLine
 \UnaryInfC{$DD\alpha$}
  \RightLabel{(cS)}
 \UnaryInfC{$\neg D\alpha$}
 \AxiomC{$\neg \neg D\alpha$}
 \RightLabel{($\neg$E)}
 \BinaryInfC{$\bot$}
 \RightLabel{($\neg$I)$_2$}
 \UnaryInfC{$\neg DD\alpha$}
 \RightLabel{($\neg$E)}
 \BinaryInfC{$\bot$}
 \RightLabel{(EASQ)}
 \UnaryInfC{$D\alpha$}
 \RightLabel{($\lor$E)$_1$.}
 \TrinaryInfC{$D\alpha$}
 \DisplayProof
\end{center}
\end{proof}

In general, it holds that if $M$ is an even string of modalities, then $M\neg \alpha \dashv \vdash \neg \alpha$ and  $MD \alpha \dashv \vdash D\alpha$ and if $M$ is an odd string of modalities, then $M\neg \alpha \dashv \vdash \neg \neg \alpha$ and  $MD \alpha \dashv \vdash DD\alpha$. 
As a consequence, the modalities are as in the following figure.

\begin{center}
\begin{tikzpicture}

    \tikzstyle{every node}=[draw, circle, fill=black, minimum size=3pt, inner sep=0pt]

\draw (-2,2) node (t) [label=right:$\ D\neg \ {=} \ \neg \neg$] {};
\draw (-2,1) node (m) [label=right:$\ \circ$] {};
\draw (-2,0) node (b) [label=right:$\ \neg D \ {=} \  DD$] {};
\draw (t)--(m)--(b);

\draw (2,1.5) node (t2) [label=right:$\ D$] {};
\draw (2,.5) node (b2) [label=right:$\ \neg$] {};
\draw (b2)--(t2);

\end{tikzpicture}

\smallskip

{\large Figure: Positive and negative modalities with $D$}
\end{center}

In \cite{MCTN} it is proved that the intuitionistic conditional $\alpha \to \beta$ may be defined as $\neg(\alpha \land \neg \beta) \land (D\alpha \lor \beta)$ and so also the biconditional $\alpha \leftrightarrow \beta$ is available as $(\alpha \to \beta) \land (\beta \to \alpha)$.

\begin{prop} \label{SD}
Let $\alpha$ and $\beta$ be any formulas. Then, 

(i) If $\alpha \vdash \beta$, then $D\beta \vdash D\alpha$,

(ii) If $\alpha \dashv \vdash \beta$, then $D\alpha \dashv \vdash D\beta$,

(iii) If $\alpha \dashv \vdash \beta$, then $\delta^{\beta/ \alpha} \dashv \vdash \delta$, for any formula $\delta$,

\noindent where the notation $\delta^{\beta/\alpha}$ stands for the formula that results from substituting in $\delta$ some or all occurrences of $\alpha$ for ocurrences of $\beta$.
\end{prop}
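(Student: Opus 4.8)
The plan is to prove the three items in order, using only the primitive rules given for $\land$, $\lor$, $\neg$, $D$ together with (S), (cS), (Reg), and Lemma~\ref{DND}; item (iii) will follow from (ii) by an induction on the structure of $\delta$.

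For (i), suppose $\alpha \vdash \beta$. I would open a derivation from the assumption $D\beta$. Using ($D$I) we have $\alpha \lor D\alpha$; the left disjunct $\alpha$ yields $\beta$ by the hypothesis, and then $\beta$ together with $D\beta$ gives $\bot$ via ($D$E$'$) (or directly via (DE): from $D\beta$ and the trivially derivable $\beta \lor \gamma$ infer $\gamma$, taking $\gamma = D\alpha$), whence $D\alpha$ by (EASQ); the right disjunct $D\alpha$ gives $D\alpha$ immediately. So ($\lor$E) closes the derivation with conclusion $D\alpha$. Item (ii) is then immediate: if $\alpha \dashv\vdash \beta$, apply (i) in both directions.

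For (iii) I would argue by induction on the complexity of $\delta$, where $\delta^{\beta/\alpha}$ replaces some chosen set of occurrences of $\alpha$ by $\beta$. The base cases are $\delta = \bot$ (no occurrence of $\alpha$ to replace, or $\delta$ is atomic and either equals $\alpha$, giving the hypothesis, or is left untouched) and $\delta$ a relation letter applied to variables, handled the same way. For the inductive step, if $\delta = \delta_1 \land \delta_2$, then $\delta^{\beta/\alpha} = \delta_1^{\beta/\alpha} \land \delta_2^{\beta/\alpha}$ (the replacements distribute over subformulas, with the possibility that $\delta$ itself is the occurrence being replaced, a case covered by the hypothesis directly); by the induction hypothesis $\delta_i \dashv\vdash \delta_i^{\beta/\alpha}$, and then using ($\land$E$_l$), ($\land$E$_r$), ($\land$I) we get $\delta \dashv\vdash \delta^{\beta/\alpha}$. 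The cases $\delta = \delta_1 \lor \delta_2$ (use ($\lor$I$_l$), ($\lor$I$_r$), ($\lor$E)) and $\delta = \neg \delta_1$ (use ($\neg$I), ($\neg$E)) are analogous and routine. The quantifier cases $\delta = \forall x\, \delta_1$ and $\delta = \exists x\, \delta_1$ use the standard quantifier rules in the same way. The one genuinely new case is $\delta = D\delta_1$: here $\delta^{\beta/\alpha} = D(\delta_1^{\beta/\alpha})$, and since by the induction hypothesis $\delta_1 \dashv\vdash \delta_1^{\beta/\alpha}$, item (ii) gives $D\delta_1 \dashv\vdash D(\delta_1^{\beta/\alpha})$, which is exactly what is needed.

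The only mild subtlety, and the place I would be most careful, is the bookkeeping of ``some or all occurrences'': in the inductive step for a binary connective one must allow that the replaced occurrences are split between the two immediate subformulas, and also allow the degenerate case in which $\delta$ in its entirety is one of the occurrences being replaced (then $\delta = \alpha$, $\delta^{\beta/\alpha} = \beta$, and the claim is the hypothesis). Making this precise is a matter of stating the induction on occurrences rather than on formulas, but it involves no logical difficulty beyond item (ii); the real content of the proposition is item (i), i.e. that $D$ is antitone, which the short derivation above settles.
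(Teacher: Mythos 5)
Your parts (ii) and (iii) are fine and match the paper's (much terser) argument: (ii) is two applications of (i), and (iii) is the routine induction on $\delta$ whose only non-intuitionistic case is $\delta = D\delta_1$, handled by (ii). The problem is in your derivation for (i). The rules ($D$E) and ($D$E$'$) carry a side condition: the premise $\alpha\lor\beta$ (respectively $\alpha$) must be derived from \emph{no open assumptions} --- that is what the empty line above that premise means, and it is the analogue of the condition that the paper's second Remark (on the system of Akama, Murai and Kudo) calls ``crucial''. In your main line you perform ($\lor$E) on $\alpha\lor D\alpha$ first, and inside the left branch you apply ($D$E$'$) to $D\beta$ and a $\beta$ that still depends on the undischarged case assumption $\alpha$. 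That application is illegal, and the unrestricted rule it presupposes is unsound for ${\bf ID}_3$: it would yield $\alpha, D\alpha\vdash\bot$ (take $\beta=\alpha$), which fails in the three-element algebra at $v(\alpha)=\frac{1}{2}$, where $v(D\alpha)=1$ and so $v(\alpha)\wedge v(D\alpha)=\frac{1}{2}\not\leq 0$. Your parenthetical alternative via ($D$E) has the same defect as written, since the ``trivially derivable'' $\beta\lor D\alpha$ is there obtained from the still-open assumption $\alpha$.

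The repair is just a reordering, and it is exactly the paper's proof: from the hypothesis $\alpha\vdash\beta$ and ($D$I), first build the \emph{categorical} theorem $\vdash\beta\lor D\alpha$ (the assumption $\alpha$ is discharged by ($\lor$E) inside this closed derivation, before any $D$-rule is invoked), and only then apply ($D$E) to the premise $D\beta$ and this closed derivation of $\beta\lor D\alpha$ to conclude $D\alpha$. So the idea is right, but where you discharge $\alpha$ matters; with the rules as stated, your derivation does not go through.
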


\begin{proof}
In (i) the hypothesis implies $\vdash \beta \lor D\alpha$ by ($D$I) whence supposing $D\beta$ by ($D$E) it follows that $D\alpha$. 

 Part (ii) follows by part (i).
 
 Part (iii) follows by part (ii) and similar properties in the case of the intuitionistic connectives.
\end{proof}

Note that by algebraic soundness it may be easily seen in the three-element chain that neither $\alpha \to \beta \vdash D\beta \to D\alpha$ nor $\alpha \leftrightarrow \beta \vdash D\alpha \leftrightarrow D\beta$ are the case.

Finally, the usual Gentzen quantifier rules are also included. 
As stated in the Introduction, our logic will be called {\bf{ID$_3$}}.

\subsection{A modal version of {\bf{ID$_3$}}}

Some readers may be interested in a version of the same logic using the necessity operator where the usual Necessitation rule is present.
Let us consider the logic {\bf{I$\Box_3$}} in the propositional language $\{\land, \lor, \neg, \Box, \bot \}$ with the following rules instead of the rules (DI), (DE), (S), (cS), and (Reg):

\begin{center}
 \AxiomC{}
 \LeftLabel{{\bf($\neg\Box$I)}}
 \RightLabel{,}
 \UnaryInfC{$\alpha \lor \neg \Box \alpha$}
 \DisplayProof
 \AxiomC{$\neg \Box \alpha$}
 \AxiomC{}
 \UnaryInfC{$\alpha \lor \beta$}
 \LeftLabel{{\bf($\neg \Box$E)}} 
 \RightLabel{,}
 \BinaryInfC{$\beta$}
 \DisplayProof \ 
 \AxiomC{$\neg \alpha$}
 \LeftLabel{($\Box$S)}
 \RightLabel{,}
 \UnaryInfC{$\Box \neg \alpha$}
 \DisplayProof
\AxiomC{$\neg \Box \neg \Box \alpha$}
 \LeftLabel{($\Box$cS)} 
 \RightLabel{,}  
 \UnaryInfC{$\Box \alpha$}
 \DisplayProof 
  \AxiomC{$\alpha \ \ \neg \Box \alpha$}
 \LeftLabel{($\Box$Reg)}
 \RightLabel{.}
 \UnaryInfC {$\beta \lor \neg \beta$}
 \DisplayProof
\end{center}

The intuitionistic conditional $\alpha \to \beta$ may also be defined in {\bf{I$\Box_3$}} as $\neg(\alpha \land \neg \beta) \land (\neg \Box \alpha \lor \beta)$ and so also the biconditional $\alpha \leftrightarrow \beta$ is available as $(\alpha \to \beta) \land (\beta \to \alpha)$.

\begin{lem} \label{BLB}
Let $\alpha$ be any formula. Then,

(S) $\vdash \neg \alpha \lor \neg \neg \alpha$,

(T) $\Box \alpha \vdash \alpha$, 

($\Box$DN) $\neg \neg \Box \alpha \dashv \vdash \Box \alpha$ (Double Negation for $\Box$), 

($\Box$TND) $\Box \alpha \lor \neg \Box \alpha$ (\emph{tertium non datur} for $\Box$),

(N) If $\vdash \alpha$, then $\vdash \Box \alpha$ (\emph{Necessitation}),

(4) $\Box \Box \alpha \dashv \vdash \Box \alpha$.
\end{lem}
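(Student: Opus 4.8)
The plan is to argue entirely inside \textbf{I$\Box_3$}, that is, using only the rules ($\neg\Box$I), ($\neg\Box$E), ($\Box$S), ($\Box$cS), ($\Box$Reg) together with the intuitionistic rules for $\land,\lor,\neg,\bot$, and to establish the six items \emph{in the listed order}, feeding each one into the derivations of the later ones. The workhorse throughout is ($\neg\Box$I), which supplies the theorem $\vdash\varphi\lor\neg\Box\varphi$ for every $\varphi$, combined with ($\lor$E) and (EASQ).

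For (S) I would instantiate ($\neg\Box$I) at $\neg\alpha$, obtaining $\vdash\neg\alpha\lor\neg\Box\neg\alpha$, and apply ($\lor$E): the left disjunct gives $\neg\alpha\lor\neg\neg\alpha$ at once, while from $\neg\Box\neg\alpha$ one assumes $\neg\alpha$, derives $\Box\neg\alpha$ by ($\Box$S), gets $\bot$, discharges to obtain $\neg\neg\alpha$, and concludes by ($\lor$I$_r$). For (T), instantiate ($\neg\Box$I) at $\alpha$ and do ($\lor$E): the disjunct $\neg\Box\alpha$ together with the hypothesis $\Box\alpha$ yields $\bot$ and then $\alpha$ by (EASQ). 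For ($\Box$DN), the direction $\Box\alpha\vdash\neg\neg\Box\alpha$ is intuitionistic; for the converse, from $\neg\neg\Box\alpha$ one assumes $\Box\neg\Box\alpha$, applies (T) at the formula $\neg\Box\alpha$ to get $\neg\Box\alpha$, gets $\bot$, discharges to obtain $\neg\Box\neg\Box\alpha$, and concludes $\Box\alpha$ by ($\Box$cS).

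Then ($\Box$TND) follows by instantiating (S) at $\Box\alpha$, namely $\vdash\neg\Box\alpha\lor\neg\neg\Box\alpha$, and rewriting the right disjunct with ($\Box$DN) under ($\lor$E). For (N): given $\vdash\alpha$, one has the theorem $\vdash\alpha\lor\Box\alpha$ by ($\lor$I$_l$); assuming $\neg\Box\alpha$, an application of ($\neg\Box$E) to $\neg\Box\alpha$ and this theorem produces $\Box\alpha$, hence $\bot$, and discharging gives $\neg\neg\Box\alpha$, so $\vdash\Box\alpha$ by ($\Box$DN) (one may instead case-split on ($\Box$TND)). The point to flag here is that ($\neg\Box$E) is being fed the trivially available theorem $\alpha\lor\Box\alpha$; this is the same device needed for the $\Box$-analogue of Proposition \ref{SD} used next.

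The least routine item is (4). Its forward half $\Box\Box\alpha\vdash\Box\alpha$ is just (T) at $\Box\alpha$. The obstacle is $\Box\alpha\vdash\Box\Box\alpha$, and I expect it to require first proving a congruence principle for $\Box$: \emph{if $\gamma\dashv\vdash\delta$ then $\Box\gamma\dashv\vdash\Box\delta$}. This is obtained in the style of Proposition \ref{SD}: from $\gamma\vdash\delta$, ($\neg\Box$I) and ($\lor$E) give the theorem $\vdash\delta\lor\neg\Box\gamma$, so ($\neg\Box$E) yields $\neg\Box\delta\vdash\neg\Box\gamma$; intuitionistic contraposition turns this into $\neg\neg\Box\gamma\vdash\neg\neg\Box\delta$, and applying ($\Box$DN) on both sides gives $\Box\gamma\vdash\Box\delta$, hence monotonicity and, by symmetry, congruence. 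With this in hand one rewrites $\Box\alpha$ as $\neg(\neg\Box\alpha)$ via ($\Box$DN), applies ($\Box$S) at the formula $\neg\Box\alpha$ to get $\Box\alpha\vdash\Box\neg\neg\Box\alpha$, and collapses $\Box\neg\neg\Box\alpha$ to $\Box\Box\alpha$ by congruence. The only real care is in the congruence lemma; the rest are short combinations of ($\lor$E), ($\neg$I), (EASQ) and the five $\Box$-rules, with attention only to which schematic instance of each is used.
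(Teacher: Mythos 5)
Your proposal is correct, and for (S), (T), ($\Box$DN), ($\Box$TND), and (N) it coincides with the paper's own derivations: the paper uses the same ($\Box$S)-based case split for (S), the same ($\Box$cS) trick for the hard direction of ($\Box$DN), the same (S)-plus-($\Box$DN) route to ($\Box$TND), and essentially your ($\neg\Box$E) argument for (N) (with $\beta=\bot$ where you take $\beta=\Box\alpha$). The one genuine divergence is the backward half of (4). The paper derives $\Box\alpha\vdash\Box\Box\alpha$ directly: assume $\neg\Box\Box\alpha$, apply ($\neg\Box$E) to it together with the theorem $\Box\alpha\lor\neg\Box\alpha$ supplied by ($\Box$TND) to extract $\neg\Box\alpha$, contradict the hypothesis $\Box\alpha$, discharge to obtain $\neg\neg\Box\Box\alpha$, and finish with ($\Box$DN). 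You instead first establish monotonicity and congruence of $\Box$ --- which is exactly Proposition \ref{SB} of the paper, proved immediately after this lemma by the very ($\neg\Box$E)-plus-contraposition argument you sketch --- and then combine $\Box\alpha\vdash\neg\neg\Box\alpha\vdash\Box\neg\neg\Box\alpha$ (via ($\Box$S) instantiated at $\neg\Box\alpha$) with congruence to collapse $\Box\neg\neg\Box\alpha$ into $\Box\Box\alpha$. Both arguments are valid and neither is circular, since the congruence principle only depends on ($\Box$DN) and the two $\neg\Box$ rules; your route is a step longer but front-loads a replacement lemma the development needs anyway, whereas the paper's is a short self-contained derivation.
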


\begin{proof}
(S) follows by ($\neg \Box$I) and ($S\Box$).
 
(T) follows by ($\neg \Box$I).

One direction of ($\Box$DN) follows by intuitionistic logic. 
For the other direction, check the following derivation.
\begin{center}
 \AxiomC{1}
 \noLine
 \UnaryInfC{$\Box \neg \Box \alpha$}
 \RightLabel{(T)}
 \UnaryInfC{$\neg \Box \alpha$}
 \AxiomC{$\neg \neg \Box \alpha$}
 \RightLabel{($\lor$E$_1$)}
 \BinaryInfC{$\bot$}
 \RightLabel{($\neg$I)$_1$}
 \UnaryInfC{$\neg \Box \neg \Box \alpha$}
 \RightLabel{($\Box$cS).}
 \UnaryInfC{$\Box \alpha$}
 \DisplayProof
\end{center}

For ($\Box$TND) check the following derivation: 
\begin{center}
 \AxiomC{}
 \RightLabel{(S)}
 \UnaryInfC{$\neg \Box \alpha \lor \neg \neg \Box \alpha$}
\RightLabel{$\lor$ commutativity}
 \UnaryInfC{$\neg \neg \Box \alpha \lor \neg \Box \alpha$} 
 \RightLabel{$\Box$DN.}
 \UnaryInfC{$\Box \alpha \lor \neg \Box \alpha$}
 \DisplayProof
\end{center}

For (N) check the following derivation:

\begin{center}
 \AxiomC{1}
 \noLine
 \UnaryInfC{$\neg \Box \alpha$}
 \AxiomC{}
 \UnaryInfC{$\alpha$}
  \RightLabel{($\lor$I)}
 \UnaryInfC{$\alpha \lor \bot$}
 \RightLabel{($\neg\Box$E)}
 \BinaryInfC{$\bot$}
 \RightLabel{($\neg$I)$_1$}
 \UnaryInfC{$\neg \neg \Box \alpha$}
 \RightLabel{($\Box$DN).}
 \UnaryInfC{$\Box \alpha$}
 \DisplayProof
\end{center}

One direction of (4) follows from (T). 
For the other direction, check the following derivation:

\begin{center}
 \AxiomC{$\Box \alpha$}
 \AxiomC{1}
 \noLine
 \UnaryInfC{$\neg \Box \Box \alpha$}
 \AxiomC{}
 \RightLabel{($\Box$TND)}
 \UnaryInfC{$\Box \alpha \lor \neg \Box \alpha$} 
  \BinaryInfC{$\neg \Box \alpha$}
 \RightLabel{($\neg$I$_1$)}
 \BinaryInfC{$\neg \neg \Box \Box \alpha$}
 \RightLabel{($\Box$DN).}
 \UnaryInfC{$\Box \Box \alpha$}
 \DisplayProof
\end{center}
\end{proof}

\begin{prop} \label{SB}
Let $\alpha$ and $\beta$ be any formulas. 
Then,

(i) If $\alpha \vdash \beta$, then $\neg \Box \beta \vdash \neg \Box \alpha$,

(ii) If $\alpha \vdash \beta$, then $\Box \alpha \vdash \Box \beta$,

(iii) If $\alpha \dashv \vdash \beta$, then $\Box \alpha \dashv \vdash \Box \beta$,

(iv) If $\alpha \dashv \vdash \beta$, then $\delta^\beta_\alpha \dashv \vdash \delta$, for any formula $\delta$.
\end{prop}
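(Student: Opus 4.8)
The plan is to prove Proposition~\ref{SB} in the stated order, since each part feeds the next. For part (i), I would use the rule $(\neg\Box\mathrm{I})$ together with the hypothesis. Concretely, $(\neg\Box\mathrm{I})$ gives $\vdash \alpha \lor \neg\Box\alpha$, and since $\alpha \vdash \beta$ we may apply the disjunction-elimination rule $(\lor\mathrm{E})$: from the first disjunct $\alpha$ we derive $\beta$, hence $\beta \lor \neg\Box\alpha$ by $(\lor\mathrm{I})$; from the second disjunct $\neg\Box\alpha$ we directly get $\beta \lor \neg\Box\alpha$; so $\vdash \beta \lor \neg\Box\alpha$. Then, assuming $\neg\Box\beta$, the rule $(\neg\Box\mathrm{E})$ (with the harmless premise $\beta \lor \neg\Box\alpha$ in the role of the ``$\alpha \lor \gamma$'' premise) yields $\neg\Box\alpha$. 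This is essentially the exact analogue of Proposition~\ref{SD}(i), just transported through the translation $D\alpha \leftrightarrow \neg\Box\alpha$.

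For part (ii), I would argue contrapositively inside the calculus. Assume $\alpha \vdash \beta$ and $\Box\alpha$; by $(\mathrm{T})$ from Lemma~\ref{BLB} we get $\alpha$, hence $\beta$. To promote $\beta$ to $\Box\beta$ I would combine $(\Box\mathrm{TND})$ for $\beta$, namely $\vdash \Box\beta \lor \neg\Box\beta$, with a $(\lor\mathrm{E})$: the $\Box\beta$ case is immediate; in the $\neg\Box\beta$ case, applying part~(i) to the hypothesis $\alpha\vdash\beta$ gives $\neg\Box\alpha$ from $\neg\Box\beta$, which contradicts our assumption $\Box\alpha$ via $(\neg\Box\mathrm{E})$ (or directly $\bot$ through $(\neg\mathrm{E})$-style reasoning, using that $\Box\alpha$ and $\neg\Box\alpha$ together yield $\bot$), and then $(\mathrm{EASQ})$ delivers $\Box\beta$. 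Hence $\Box\alpha \vdash \Box\beta$. Part~(iii) is then immediate: apply part~(ii) in both directions.

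Part (iv), the congruence/replacement property, I would prove by a routine induction on the structure of $\delta$, exactly parallel to Proposition~\ref{SD}(iii). The base case where $\delta$ is atomic (or $\bot$, or $\delta$ is literally $\alpha$) is trivial or handed to us by hypothesis. For the inductive step, the connectives $\land$, $\lor$ are handled by the usual intuitionistic congruence properties, and the only genuinely modal case, $\delta = \Box\gamma$, is dispatched by part~(iii) applied to the inductive hypothesis $\gamma^\beta_\alpha \dashv\vdash \gamma$. Since the defined conditional and biconditional are built from $\land,\lor,\neg,\Box,\bot$, they require no separate treatment.

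The only mild obstacle is part~(ii): unlike the intuitionistic-negation situation, one cannot conclude $\Box\alpha \vdash \Box\beta$ by a direct monotonicity argument, because $\Box$ is not introduced by a rule that lets us ``plug in'' a derivation the way $(\to\mathrm{I})$ or $(\forall\mathrm{I})$ would; we are forced to route through excluded middle for $\Box$-formulas (i.e. $(\Box\mathrm{TND})$) plus part~(i). Everything else is a faithful re-run of the $D$-version already carried out in Proposition~\ref{SD}, now using the translation-dictionary entries from Lemma~\ref{BLB}.
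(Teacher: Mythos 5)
Your proposal is correct and follows essentially the same route as the paper: part (i) via $(\neg\Box\mathrm{I})$, $(\lor\mathrm{E})$ and $(\neg\Box\mathrm{E})$, part (iii) from (ii), and part (iv) by induction on $\delta$ using (iii) for the $\Box$ case. The only deviation is part (ii), where the paper chains $\Box\alpha \vdash \neg\neg\Box\alpha \vdash \neg\neg\Box\beta \vdash \Box\beta$ using intuitionistic contraposition of (i) together with $(\Box\mathrm{DN})$, whereas you run a case split on $(\Box\mathrm{TND})$ with $(\neg\mathrm{E})$ and $(\mathrm{EASQ})$; both derivations are valid and both ultimately rest on part (i) and Lemma \ref{BLB}.
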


\begin{proof}

(i) \begin{center}
 \AxiomC{$\neg \Box \beta$}
 \AxiomC{}
 \RightLabel{($\neg\Box$I)}
 \UnaryInfC{$\alpha \lor \neg \Box \alpha$}
 \AxiomC{1}
 \noLine
 \UnaryInfC{$\alpha$}
 \RightLabel{(Hyp)}
 \UnaryInfC{$\beta$}
 \RightLabel{($\lor$I)}
 \UnaryInfC{$\beta \lor \neg \Box \alpha$}
 \AxiomC{1}
 \noLine
 \UnaryInfC{$\neg \Box \alpha$}
 \RightLabel{($\lor$I)}
 \UnaryInfC{$\beta \lor \neg \Box \alpha$}
 \RightLabel{($\lor$E$_1$)}
 \TrinaryInfC{$\beta \lor \neg \Box \alpha$}
 \RightLabel{($\neg \Box$E).}
 \BinaryInfC{$\neg \Box \alpha$}
 \DisplayProof
\end{center}

 Part (ii) follows from part (i) as $\Box \alpha \vdash \neg \neg \Box \alpha \vdash \neg \neg \Box \beta \vdash \Box \beta$.
 
Part (iii) follows from part (ii). 
 
 Part (iv) follows from part (iii) and similar properties in the case of the intuitionistic connectives.
\end{proof}

The modalities are as in the following figure.
Note that possibility, usually defined as $\neg \Box \neg$, in {\bf{I$\Box_3$}} is equivalent to double negation.

\begin{center}
\begin{tikzpicture}

    \tikzstyle{every node}=[draw, circle, fill=black, minimum size=3pt, inner sep=0pt]

\draw (-2,2) node (t) [label=right:$\ \neg \Box \neg \ {=} \ \neg \neg$] {};
\draw (-2,1) node (m) [label=right:$\ \circ$] {};
\draw (-2,0) node (b) [label=right:$\ \Box$] {};
\draw (t)--(m)--(b);

\draw (2,1.5) node (t2) [label=right:$\ \neg \Box $] {};
\draw (2,.5) node (b2) [label=right:$\ \neg$] {};
\draw (b2)--(t2);

\end{tikzpicture}

\smallskip

{\large Figure: Positive and negative modalities with $\Box$}
\end{center}

\subsection{Equivalence}

It is easily seen that the logics {\bf{ID$_3$}} and {\bf{I$\Box_3$}} are equivalent using the translations $D:= \neg \Box$ and $\Box := \neg D$ together with the fact that $\neg \neg D \alpha \dashv \vdash D \alpha$ and the items stated in Lemma \ref{BLB}.

Since in this subsection we will deal with two different logics having two different languages, we will use $\mathfrak{F}_D$ and $\vdash_D$ for the set of formulas and the consequence relation of the logic {\bf{ID$_3$}} and  $\mathfrak{F}_\Box$ and $\vdash_\Box$ for the logic {\bf{I$\Box_3$}}. 

We recursively define the function $()^t: \mathfrak{F}_D \to \mathfrak{F}_{\Box}$ by the uniform replacement of any ocurrence of $D$ by $\neg \Box$.
Similarly, we define the function $()^s: \mathfrak{F}_\Box \to \mathfrak{F}_D$ by the uniform replacement of any ocurrence of $\Box$ by $\neg D$. 
It is routine to check the following facts.

\begin{lem} \label{equi}
Let $\alpha \in \mathfrak{F}_D$ and $\beta \in \mathfrak{F}_\Box$. 
Then,

(i) If $\alpha \dashv \vdash_D (\alpha^t)^s$,

(ii) If $\beta \dashv \vdash_\Box (\beta^s)^t$.
\end{lem}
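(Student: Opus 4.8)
The plan is to prove both (i) and (ii) simultaneously by structural induction on the complexity of the formula, exploiting that the only interesting case is the modal connective, since both translations act by uniform replacement and therefore commute with $\land$, $\lor$, $\neg$, $\bot$ and the quantifiers. Concretely, for (i) I would show by induction on $\alpha \in \mathfrak{F}_D$ that $\alpha \dashv\vdash_D (\alpha^t)^s$. For atomic $\alpha$ (a relation letter applied to variables, or $\bot$) we have $(\alpha^t)^s = \alpha$ syntactically, so the claim is trivial. For $\alpha = \beta \circ \gamma$ with $\circ \in \{\land,\lor\}$, or $\alpha = \neg\beta$, or $\alpha = Q x\,\beta$ with $Q \in \{\forall,\exists\}$, the translations distribute, so $(\alpha^t)^s = \beta' \circ \gamma'$ (resp. $\neg\beta'$, $Qx\,\beta'$) where $\beta' = (\beta^t)^s$ and $\gamma' = (\gamma^t)^s$; the inductive hypotheses $\beta \dashv\vdash_D \beta'$ and $\gamma \dashv\vdash_D \gamma'$ then give the result by the replacement property, namely Proposition \ref{SD}(iii) for the intuitionistic connectives together with the congruence of $\dashv\vdash_D$ under the quantifier rules.

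The one case requiring real work is $\alpha = D\beta$. Here $(D\beta)^t = \neg\Box(\beta^t)$, and then $(\neg\Box(\beta^t))^s = \neg\neg D\,((\beta^t)^s) = \neg\neg D\beta'$ where $\beta' = (\beta^t)^s$. By the inductive hypothesis $\beta \dashv\vdash_D \beta'$, so by Proposition \ref{SD}(ii) we get $D\beta \dashv\vdash_D D\beta'$, and hence $\neg\neg D\beta \dashv\vdash_D \neg\neg D\beta'$ by congruence of intuitionistic negation. It now suffices to invoke Lemma \ref{DND}, which gives $\neg\neg D\beta \dashv\vdash_D D\beta$; chaining these yields $D\beta \dashv\vdash_D \neg\neg D\beta' = (\,(D\beta)^t\,)^s$, as desired.

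The argument for (ii) is the mirror image. By induction on $\beta \in \mathfrak{F}_\Box$, the only nontrivial case is $\beta = \Box\gamma$, where $(\Box\gamma)^s = \neg D(\gamma^s)$ and $((\neg D(\gamma^s))^t = \neg\neg\Box(\gamma^s)^t = \neg\neg\Box\gamma'$ with $\gamma' = (\gamma^s)^t$. The inductive hypothesis $\gamma \dashv\vdash_\Box \gamma'$ gives $\Box\gamma \dashv\vdash_\Box \Box\gamma'$ by Proposition \ref{SB}(iii), hence $\neg\neg\Box\gamma \dashv\vdash_\Box \neg\neg\Box\gamma'$ by congruence, and then ($\Box$DN) from Lemma \ref{BLB} gives $\neg\neg\Box\gamma \dashv\vdash_\Box \Box\gamma$; chaining yields $\Box\gamma \dashv\vdash_\Box \neg\neg\Box\gamma' = ((\Box\gamma)^s)^t$. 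The remaining connective and quantifier cases go through verbatim as in (i), using Proposition \ref{SB}(iv) for the replacement property.

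The main obstacle, such as it is, is purely bookkeeping: one must be careful that the two substitution operations $()^t$ and $()^s$ genuinely commute with all the non-modal constructors (so that the induction on subformulas is legitimate) and that the double-negation collapse is applied to the correct subformula — $D\beta$ in (i), $\Box\gamma$ in (ii) — rather than to the whole formula. Once the two lemmas $\neg\neg D\beta \dashv\vdash_D D\beta$ (Lemma \ref{DND}) and $\neg\neg\Box\gamma \dashv\vdash_\Box \Box\gamma$ (Lemma \ref{BLB}, item $\Box$DN) are in hand, together with the congruence Propositions \ref{SD} and \ref{SB}, everything else is mechanical, which is why the paper rightly calls it ``routine''.
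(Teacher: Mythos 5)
Your proof is correct and follows essentially the same route as the paper: the paper's one-line argument cites exactly the same two ingredients you use, namely the replacement/congruence properties (Proposition \ref{SD}(iii), resp.\ \ref{SB}(iv)) for all non-modal constructors, and the double-negation collapse $\neg\neg D\alpha \dashv\vdash_D D\alpha$ (Lemma \ref{DND}), resp.\ $\neg\neg\Box\alpha \dashv\vdash_\Box \Box\alpha$ (($\Box$DN) in Lemma \ref{BLB}), for the single nontrivial modal case. You have merely made explicit the structural induction that the paper leaves implicit as ``routine''.
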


\begin{proof}
Part (i) follows from part (iii) of Proposition \ref{SD} and Lemma \ref{DND}. 
Similarly, part (ii) follows from part (iv) of Proposition \ref{SB} and ($\Box$DN) in Lemma \ref{BLB}.
\end{proof}

\begin{lem} \label{HTRANS}
Functions $t$ and $s$ defined above satisfy the following facts:

(i) If $\Gamma \vdash_D \alpha$, then $\Gamma^t \vdash_\Box \alpha^t$, for $\Gamma \cup \{\alpha\} \subseteq \mathfrak{F}_D$,

(ii) If $\Gamma \vdash_\Box \alpha$, then $\Gamma^s \vdash_D \alpha^s$, for $\Gamma \cup \{\alpha\} \subseteq \mathfrak{F}_\Box$.
\end{lem}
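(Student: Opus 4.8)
The plan is to prove both (i) and (ii) by induction on the length of derivations in the respective natural deduction systems. Since the two statements are symmetric, I would carry out (i) in detail and remark that (ii) is entirely analogous (swapping the roles of $D$/$\neg\Box$ and using Lemma~\ref{BLB} in place of Lemma~\ref{DND}). For (i), the base case is when $\alpha \in \Gamma$, which is immediate since then $\alpha^t \in \Gamma^t$. For the inductive step, I would go through each rule of {\bf{ID$_3$}} and check that its $t$-translation is a derivable rule of {\bf{I$\Box_3$}}.

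The rules fall into three groups. The first group consists of the rules shared by both systems: the conjunction rules ($\land$I), ($\land$E$_l$), ($\land$E$_r$), the disjunction rules ($\lor$I$_l$), ($\lor$I$_r$), ($\lor$E), the intuitionistic negation rules ($\neg$I), ($\neg$E), (EASQ), and the quantifier rules. For these, the translation $t$ commutes with the connective in question (e.g.\ $(\alpha \land \beta)^t = \alpha^t \land \beta^t$), so the translated instance is literally an instance of the same rule in {\bf{I$\Box_3$}}; there is nothing to prove beyond invoking the induction hypothesis on the premises. The second group consists of the rules governing $D$, namely ($D$I), ($D$E), (S), (cS), and (Reg). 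Here I must show, for instance, that $\vdash_\Box \alpha^t \lor (D\alpha)^t$, i.e.\ $\vdash_\Box \alpha^t \lor \neg\Box\alpha^t$, which is exactly the rule ($\neg\Box$I); that the $t$-image of ($D$E) is ($\neg\Box$E); that the $t$-image of (S) is the instance of (S) stated in Lemma~\ref{BLB}; that the $t$-image of (cS), which asks $(D\alpha)^t, (DD\alpha)^t \vdash_\Box \bot$, i.e.\ $\neg\Box\alpha^t, \neg\Box\neg\Box\alpha^t \vdash_\Box \bot$, holds --- this follows from ($\Box$cS) (giving $\Box\alpha^t$ from the second premise) together with ($\neg$E) against the first; and that the $t$-image of (Reg) follows from ($\Box$Reg) after noting $(D\alpha)^t = \neg\Box\alpha^t$. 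The third and most delicate point is that {\bf{I$\Box_3$}} has a rule, Necessitation (N) from Lemma~\ref{BLB}, but more relevantly the derived directionality of $\Box$ via ($\Box$S), while {\bf{ID$_3$}} phrases things through $D$; one must be careful that no rule of {\bf{ID$_3$}} has a side condition (like a discharged hypothesis or an eigenvariable condition) that is disturbed by the translation, but since $t$ acts only on the symbol $D$ and leaves the propositional/quantificational skeleton intact, all such side conditions transfer verbatim.

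**The main obstacle** I anticipate is bookkeeping rather than conceptual difficulty: one has to make sure that the alternative formulations of the rules are handled consistently --- e.g.\ if one takes (S) and (cS) as primitive in {\bf{ID$_3$}}, their translations must be matched against (S) and the derived double-negation facts in {\bf{I$\Box_3$}}, whereas ($\Box$S) and ($\Box$cS) are primitive there. A clean way to manage this is to first record, once and for all, that $(D\alpha)^t \dashv\vdash_\Box \neg\Box(\alpha^t)$ (immediate, since $(D\alpha)^t = \neg\Box\alpha^t$ by definition of $t$) and that consequently the $t$-translations of ($D$I), ($D$E), (S), (cS), (Reg) coincide up to notation with ($\neg\Box$I), ($\neg\Box$E), the (S) of Lemma~\ref{BLB}, a one-line consequence of ($\Box$cS), and ($\Box$Reg) respectively. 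After that the induction writes itself. Part (ii) is proved the same way, reading the table in reverse: $(\Box\beta)^s = \neg D\beta^s$, the $s$-image of ($\neg\Box$I) is an instance of ($D$I) after $\lor$-commutativity, the $s$-image of ($\Box$cS) is obtained from (cS') (equivalently (cS)), the $s$-image of ($\Box$S) is obtained using (S') (equivalently (S)), and the remaining rules transfer directly; the derived facts of Lemma~\ref{BLB}, in particular ($\Box$DN), ensure the non-primitive steps go through.
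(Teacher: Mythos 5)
Your proposal is correct and follows essentially the same route as the paper: an induction on derivations in which the shared rules transfer verbatim and only the $D$-specific rules need checking, with exactly the reductions the paper works out (the $t$-images of (S) and (cS) via ($\Box$S)/($\Box$cS), and the $s$-direction via (S$'$)/(cS$'$) plus the double-negation facts). The only blemish is a swapped attribution of which double-negation lemma serves which direction --- for part (i) the target system is {\bf{I$\Box_3$}}, so one uses ($\Box$DN) from Lemma~\ref{BLB}, while for part (ii) one uses Lemma~\ref{DND} --- but this does not affect the substance of the argument.
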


\begin{proof}
The proof is routine. 
We explicitly work the cases of the (S) and (cS) rules in the case of the $t$-translation. 
The $t$-function of a step
\AxiomC{$D\neg \alpha$}
 \RightLabel{(S)}
 \UnaryInfC{$\neg \neg \alpha$}
 \DisplayProof
is

\begin{center}
 \AxiomC{1}
 \noLine
 \UnaryInfC{$\neg \alpha^t$}
 \RightLabel{($\Box$S)}
 \UnaryInfC{$\Box \neg \alpha^t$}
 \AxiomC{$\neg \Box \neg \alpha^t$}
 \RightLabel{($\neg$E)}
 \BinaryInfC{$\bot$}
 \RightLabel{($\neg$I)$_1$.}
 \UnaryInfC{$\neg \neg \alpha^t$}
 \DisplayProof
\end{center}

The $t$-function of a step
\AxiomC{$DD \alpha$}
 \RightLabel{(cS)}
 \UnaryInfC{$\neg D \alpha$}
 \DisplayProof
is

\begin{center}
 \AxiomC{1}
 \noLine
 \UnaryInfC{$\neg \Box \neg \Box \alpha^t$}
 \RightLabel{($\Box$cS)}
 \UnaryInfC{$\Box \alpha^t$}
 \AxiomC{$\neg \Box \alpha^t$}
 \RightLabel{($\neg$E)}
 \BinaryInfC{$\bot$}
 \RightLabel{($\neg$I)$_1$.}
 \UnaryInfC{$\neg \neg \Box \alpha^t$}
 \DisplayProof
\end{center}
\end{proof}

\begin{theo}
Functions $t$ and $s$ are translations, that is,

(i) $\Gamma \vdash_D \alpha$ iff $\Gamma^t \vdash_\Box \alpha^t$, for $\Gamma \cup \{\alpha\} \subseteq \mathfrak{F}_D$,

(ii) $\Gamma \vdash_\Box \alpha$ iff $\Gamma^s \vdash_D \alpha^s$, for $\Gamma \cup \{\alpha\} \subseteq \mathfrak{F}_\Box$.

Furthermore, these translations prove that the logics {\bf{ID$_3$}} and {\bf{I$\Box_3$}} are equivalent.
\end{theo}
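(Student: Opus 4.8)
The plan is to prove the two biconditionals (i) and (ii) by combining the one-directional transfer lemmas (Lemma \ref{HTRANS}) with the round-trip identity lemmas (Lemma \ref{equi}) and the substitution-of-equivalents results (Proposition \ref{SD}(iii) and Proposition \ref{SB}(iv)). First I would dispose of the forward implications: in (i), if $\Gamma \vdash_D \alpha$ then $\Gamma^t \vdash_\Box \alpha^t$ is exactly Lemma \ref{HTRANS}(i), and similarly (ii)'s forward implication is Lemma \ref{HTRANS}(ii). So the content is entirely in the converse directions.

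For the converse of (i), suppose $\Gamma^t \vdash_\Box \alpha^t$. Applying Lemma \ref{HTRANS}(ii) (the $s$-translation transfer) gives $(\Gamma^t)^s \vdash_D (\alpha^t)^s$, where $(\Gamma^t)^s$ denotes the pointwise $s$-image of the $t$-image of $\Gamma$. Now Lemma \ref{equi}(i) says $\delta \dashv\vdash_D (\delta^t)^s$ for every $\delta \in \mathfrak{F}_D$; applying this to $\alpha$ and to each member of $\Gamma$, together with the fact that a derivation is preserved under replacing premises and conclusion by interderivable formulas, yields $\Gamma \vdash_D \alpha$. (Here one uses that from $\gamma \dashv\vdash_D \gamma'$ for each $\gamma \in \Gamma$ and from $\Gamma' \vdash_D \alpha'$ with $\alpha \dashv\vdash_D \alpha'$ one gets $\Gamma \vdash_D \alpha$ by cut-like composition, which is immediate from the natural-deduction formalism.) The converse of (ii) is entirely symmetric, using Lemma \ref{HTRANS}(i) to pass from $\Gamma^s \vdash_D \alpha^s$ to $(\Gamma^s)^t \vdash_\Box (\alpha^s)^t$ and then Lemma \ref{equi}(ii) to replace $(\beta^s)^t$ by $\beta$ throughout.

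Finally, for the claim that $t$ and $s$ exhibit {\bf ID$_3$} and {\bf I$\Box_3$} as equivalent logics, I would observe that (i) and (ii) together say precisely that $t: \mathfrak{F}_D \to \mathfrak{F}_\Box$ and $s: \mathfrak{F}_\Box \to \mathfrak{F}_D$ are faithful and full interpretations of each consequence relation into the other, while Lemma \ref{equi}(i),(ii) say that $s \circ t$ and $t \circ s$ are the identity up to provable equivalence. These are exactly the conditions in the standard notion of equivalence of deductive systems (a pair of translations that are mutually inverse modulo interderivability and that preserve and reflect derivability), so the conclusion follows by unwinding that definition.

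The main obstacle I anticipate is not conceptual but bookkeeping: Lemma \ref{HTRANS} is stated for a single formula $\alpha$ on the right and a set $\Gamma$ on the left, and one must be slightly careful that the transfer of an entire derivation really does send the set of undischarged assumptions $\Gamma$ to $\Gamma^t$ (respectively $\Gamma^s$) — i.e. that the rule-by-rule translation in the proof of Lemma \ref{HTRANS} does not silently introduce or discharge extra assumptions. Granting that (which the explicit treatment of the (S) and (cS) cases there makes plausible for all rules), the rest is the routine composition described above, and no genuinely new argument is needed.
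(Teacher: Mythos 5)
Your proposal is correct and follows essentially the same route as the paper: the forward directions are Lemma \ref{HTRANS}, and the converse of (i) is obtained by applying Lemma \ref{HTRANS}(ii) to $\Gamma^t \vdash_\Box \alpha^t$ and then Lemma \ref{equi}(i) to replace $(\delta^t)^s$ by $\delta$ throughout, exactly as in the paper's (terser) argument. Your added remarks on the symmetric case, the cut-like composition with interderivable formulas, and the definition of equivalence of logics are sensible elaborations of steps the paper leaves implicit.
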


\begin{proof}
Suppose $\Gamma^t \vdash_\Box \alpha^t$. Then, by part (ii) in Lemma \ref{HTRANS} it follows that $(\Gamma^t)^s \vdash_D (\alpha^t)^s$ whence $\Gamma \vdash_D \alpha$ by part (i) of Lemma \ref{equi}.
\end{proof}

In the rest of the paper we will only be considering the logic {\bf{I$D_3$}}, leaving to the reader the analogous results for the logic {\bf{I$\Box_3$}}.

\section{Semantic notions}

In this section we state the algebraic and Kripke notions required in order to understand the contents of this  paper.

\subsection{Algebraic semantics for propositional ID$_3$}  \label{31}

A \emph{double p-algebra} is an algebra $(A; \wedge, \vee, \neg, D, 0, 1)$ of type $(2, 2, 1, 1, 0, 0)$ such that $(A; \wedge, \vee, 0, 1)$ is a bounded distributive lattice and $\neg$ and $D$ are the meet and join complement, respectively, that is, they satisfy $x \wedge y = 0$ iff $y \leq \neg x$ and $x \vee y = 1$ iff $Dx \leq y$, respectively, where $\leq$ is the lattice order (see \cite{JV} and \cite{TK} for more information).
Note that it follows that $x \wedge \neg x = 0$ and $x \vee Dx = 1$.

In this paper we will only consider double p-algebras that are both \emph{regular} and \emph{bi-Stone}, that is, double-p algebras that satisfy both

$x \wedge Dx \leq x \vee \neg x$

\noindent and

$\neg x \vee \neg \neg x = 1$ and $Dx \wedge DDx = 0$.

\noindent Note that any of the last two equations imply both the equations $\neg Dx = \neg D \neg Dx$ and $D\neg x = D\neg D\neg x$. 

The notation ${\bf{3}}$ will stand for the three element bi-Stone and regular double p-algebra with universe $\{0 < \frac{1}{2} < 1 \}$.

This algebra has associated a propositional logic with 
connectives $\{ \land, \lor, \neg, D, \bot\}$ whose notion of semantic consequence is as follows.
A formula $\alpha$ is an \emph{algebraic consequence} of a set $\Gamma$ of formulas if for every valuation $v$ on ${\bf{3}}$, it holds that min$\{ v\gamma: \gamma \in \Gamma\} \leq v\alpha$.

Theorem 2 of \cite{MCTN} implies that the aforementioned propositional logic is sound and complete relative to a propositional calculus with the same rules for the connectives in {\bf{ID$_3$}}.

\subsection{Kripke semantics for {\bf{ID$_3$}}} \label{KS}

It holds that $\forall x(\alpha \lor Qx) \vdash_{ID_3} \alpha \lor \forall x Qx$, where $\alpha$ is a formula without ocurrences of free variables. 
For a proof, check part (ii) of the proof of Theorem 1 in \cite{MCTN}. 
As a consequence, it will be enough to consider Kripke models that have the same universe in every node, which are usually called ``Kripke models with constant domain".

\begin{defi}
Given a first-order language $L$, an L-\emph{Kripke structure} is a quadruple $(K, \leq, U, \rho)$ such that $(K, \leq)$ is a (non-empty) poset called \emph{frame}, $U$ is a non-empty set called \emph{universe}, $\rho$ is a binary function called \emph{realization} that assigns to each $n$-ary relation letter $R$ and $k \in K$ an $n$-ary relation $R^\rho_k \in U^n$ such that if $k \leq k'$, then $R^\rho_k \subseteq R^\rho_{k'}$.
\end{defi}

Given a L-Kripke structure with universe $U$, an \emph{assignment} is a function that assigns an element of $U$ to each variable in the language $L$. 
Given an assignment $e$, an $x$-\emph{variant assignment} of $e$ is an assignment $e^{u/x}$ such that $e^{u/x}(y) = u$ if $y = x$ else $e^{u/x}(y) = e(y)$, where $x, y$ are variables and $u \in U$.
We will use $E_U$ for the set of all the possible assignments in an L-Kripke structure with universe U.

\begin{defi}
An L-\emph{Kripke model} is a quintuple ${\bf{K}} = (K, \leq, U, \rho, e)$ such that $(K, \leq, U, \rho)$ is a $L$-Kripke structure and $e$ is an assignment.
\end{defi}

For any L-Kripke structure $(K, \leq, U, \rho)$, we write $F$ for the unique ternary relation $F \subseteq K \times E_U \times F_L$ satisfying the following conditions for $k, k' \in K$, $e \in E_U$, $R$ a relation letter in $L$, $x, x_1, \dots, x_n$ in the set of variables of $L$, and $\alpha$ and $\beta \in F_L$.

$(k, e, R(x_1, \dots, x_n)) \in F$ iff $(e(x_1), \dots, e(x_n)) \in R ^\rho_k$,

$(k, e, \alpha \land \beta)$ iff $(k, e, \alpha) \in F$ and $(k, e, \beta) \in F$,

$(k, e, \alpha \lor \beta)$ iff $(k, e, \alpha) \in F$ or $(k, e, \beta) \in F$,

$(k, e, \neg \alpha)$ iff for all $k' \geq k$, $(k', e, \alpha) \notin F$,

$(k, e, D\alpha)$ iff there exists $k' \leq k$ such that $(k', e, \alpha) \notin F$,

$(k, e, \forall x \alpha)$ iff for every node $k' \geq k$ and every $u \in U$ it holds that $(k',  e^{u/x}, \alpha) \in F$,

$(k, e, \exists x \alpha)$ iff there exists $u \in U$ such that $(k',  e^{u/x}, \alpha) \in F$.

\

For any L-\emph{Kripke model} with universe $K$ and assignment $e$, we define its associated forcing relation $\Vdash \subseteq K \times F_L$ by $(k, \alpha) \in \ \Vdash$ iff $(k, e , \alpha) \in F$. 
In what follows, we shall write $k \Vdash \alpha$ instead of $(k, \alpha) \in \ \Vdash$.

\begin{defi}
We say that a (closed) formula $\alpha$ is \emph{Kripke-consequence} of a set $\Gamma$ of (closed) formulas if for every Kripke model and every node $k$ it holds that if $k \Vdash \gamma$ for all $\gamma \in \Gamma$, then $k \Vdash \alpha$.

We say that a formula $\alpha$ is \emph{Kripke-valid} if for every Kripke model and every node $k$ it holds that $k \Vdash \alpha$.
\end{defi}

In the rest of this paper we will only consider Kripke models with universe $\{1 < \frac{1}{2}\}$.

\section{Rough sets semantics}

In this section we present another semantics for the logic {\bf{ID$_3$}}.

Rough sets were introduced by Pawlak and his co-workers in the early 1980s (for instance, see \cite{RS} and \cite{RSTARD}).

An \emph{approximation space} is a pair $(U, \theta)$, where $U$ is a non-empty set called the \emph{universe} of the approximation space and $\theta$ is an equivalence relation on $U$ called the \emph{indiscernibility} relation.

Given an approximation space $(U, \theta)$, we define the \emph{nth-power approximation space} of $(U, \theta)$ as the pair $(U^n, \theta^n)$, where $\theta^n$ is given by 

$((u_1, \dots ,u_n ), (v_1, \dots ,v_n )) \in \theta^n$ iff for all $1\leq i \leq n$, it holds that $(u_i, v_i) \in \theta$.

\noindent It is easily seen that $\theta^n$ is an equivalence relation (this construction already appears in \cite{TS}).

The following notions are central in the theory of rough sets.

\begin{defi}
Let ${\bf{A}} = (U, \theta)$ be an \emph{approximation space} and $X \subseteq U$. 

The \emph{lower approximation} of $X$ in ${\bf{A}}$, in symbols $\underline{X}$, is the set 

$\{u \in U :$ if there exists $x \in X$ such that $(u, x) \in \theta$, then $u \in X \}$. 

\noindent Analogously, the \emph{upper approximation} of $X$ in ${\bf{A}}$, in symbols $\overline{X}$, is the set 

$\{u \in U :$ there exists $x \in X$ such that $(u, x) \in \theta\}$.
\end{defi}

\

Let us now state our rough sets semantics.

\begin{defi}
Given a first-order language $L$ (which, for simplicity, we have assumed only with a non-empty set of $n$-ary predicate letters), a pair $(U, \sigma)$ where $U$ is a non-empty set and $\sigma$ is a function that associates an $n$-ary relation $\sigma(R) = R^{\sigma} \subseteq U^n$ to every $n$-ary predicate letter $R$ in $L$ will be called an \emph{$L$-structure}.
\end{defi}

Note that for a given approximation space $(U,\theta)$,  each $R^{\sigma}  \subseteq U^n$ may be viewed as a rough subset of $(U^n, \theta^n$). 

\begin{defi}
A \emph{rough $L$-structure} is a triple $(U,\theta, \sigma)$, where $(U, \theta)$ is an approximation space, $(U,\sigma)$ is an $L$-structure (and each $\sigma(R)$ is seen as a rough subset of $(U^n, \theta^n))$.
\end{defi}

\begin{defi}
A \emph{rough interpretation} of a language $L$ is a quadruple $\mathcal{I} = (U,\theta, \sigma, f)$, where $(U,\theta, \sigma)$ is a rough $L$-structure and $f : Var_{_L} \to U$ is a function assigning an element of $U$ to each variable of $L$. 
\end{defi}

As usual, given an interpretation $\mathcal{I} = (U,\theta, \sigma, f)$ and $a \in U$, 
the notation ${\mathcal{I}^{a/x}}$ indicates the interpretation with the same $L$-structure as $\mathcal{I}$ but with an assignment $f^{a/x}$ such that $f^{a/x}(x)= a$ and $f^{a/x}(y) = f(y)$, for $y \neq x$.

\

Recall that we indicate the upper approximation of $R^{\sigma}$ by $\overline{R^{\sigma}}$, and its lower approximation by $\underline{R^{\sigma}}$.

\begin{defi}
Let $\mathfrak{F}_{_L}$ be the set of formulas of the language $L$, let ${\bf{3}}$ be the three element algebra $(3; \wedge, \vee, \neg, D)$ as in the end of Subsection \ref{31} and let $\mathcal{I}$ be a rough interpretation for $L$ with assignment $f$.
We recursively define the function $v_{_\mathcal{I}}: \mathfrak{F}_{_L} \to \bf{3}$ which we will call the $\bf{3}$-\emph{valuation associated to} $\mathcal{I}$ as follows: 
\begin{itemize}
	\item[] For every $n$-ary predicate letter $R$, we stipulate
	$$v_{_\mathcal{I}}(R(x_1,\dots, x_n)):=
	\left\{
	\begin{array}{l}
		1, \textrm{ if } (f(x_1), \cdots, f(x_n)) \in \underline{R^{\sigma}}, \\
		\frac{1}{2}, \textrm{ if } (f(x_1), \cdots, f(x_n)) \in \overline{R^{\sigma}} - \underline{R^{\sigma}}, \\
		0, \textrm{ if } (f(x_1), \cdots, f(x_n)) \notin \overline{R^{\sigma}}.\\
	\end{array}
	\right.$$
    \item[] Let now $\alpha, \beta$ be $L$ formulas. 
We stipulate
    \begin{itemize}
    	\item[] $v_{_\mathcal{I}}(\neg \alpha) := \neg (v_{_\mathcal{I}}(\alpha))$,
    	\item[] $v_{_\mathcal{I}}(D \alpha) := D (v_{_\mathcal{I}}(\alpha))$,
    	\item[] $v_{_\mathcal{I}}(\alpha \wedge \beta) := v_{_\mathcal{I}}(\alpha) \wedge v_{_\mathcal{I}}(\beta)$, and 
    	\item[] $v_{_\mathcal{I}}(\alpha \vee \beta) := v_{_\mathcal{I}}(\alpha) \vee v_{_\mathcal{I}}(\beta)$.
    \end{itemize}
    \item[] Finally, for any $L$ formula $\alpha$ we define
    \begin{itemize}
    	\item[] $v_{_\mathcal{I}}(\forall x \alpha):= min\{v_{_{\mathcal{I}^{a/x}}}(\alpha): a \in U \}$ and 
    	\item[] $v_{_\mathcal{I}}(\exists x \alpha):= max\{v_{_{\mathcal{I}^{a/x}}}(\alpha): a \in U \}$. 
    \end{itemize}
\end{itemize}
\end{defi}

\begin{rem}
Pawlak at p.343 in \cite{RS} stated that ``\emph{we can interprete approximations as counterparts of necessity and possibility in modal logic}".
Let us note that the valuation associated to a rough interpretation $\mathcal{I}$ for the connectives $\Box$ and $\neg \neg$ (in the language of {\bf{I$\Box_3$}}) only takes values $0$ or $1$ and satisfies

$v_{_\mathcal{I}}(R(x_1,\dots, x_n)) = 1$ iff $(f(x_1),\dots, f(x_n)) \in \underline{R^{\sigma}}$ iff $(f(x_1),\dots, f(x_n))$ ``\emph{surely belongs}" to $R^\sigma$,
 
 $v_{_\mathcal{I}}(R(x_1,\dots, x_n)) = 1$ iff $(f(x_1),\dots, f(x_n)) \in \overline{R^{\sigma}}$ iff $(f(x_1),\dots, f(x_n))$ ``\emph{possibly belongs}" to $R^\sigma$.
 
\end{rem}

Now, let us define the notion of semantic consequence in the way studied in \cite{BEFGGTV}.

\begin{defi}
Let $\Gamma \cup \{\alpha\} \subseteq L$.
We define $\Gamma \vDash \alpha$ if for every interpretation $\mathcal{I}$ of $L$, it holds that min$\{v_I(\gamma) \} \leq v_I(\alpha)$.
\end{defi}

\section{Soundness and completeness}

Our goal is to prove soundness and completeness of the logic given in Section 2. 
In \cite{MCTN} we proved soundness and completeness relative to Kripke models as were given in Section \ref{KS}. 
So, it will be enough to prove that we can assign to every Kripke model a rough interpretation and conversely in such a way that Propositions \ref{ItoK} and \ref{KtoI} hold. 

\ 

To any rough interpretation we can associate a Kripke model as follows.

\begin{defi}
Let $\mathcal{I} = (U,\theta, \sigma, f)$ be a rough interpretation. 
We define the \emph{Kripke model associated} to the rough interpretation $\mathcal{I}$ as the Kripke model $K_{_\mathcal{I}} = (K, \leq, U_{_\mathcal{I}}, \rho, e)$ defined as follows. 
As the two-element Kripke models studied in \cite{MCTN}, $(K,\leq) = \{1 < \frac{1}{2}\}$. 
Its universe $U_{_\mathcal{I}}$ is the set of equivalence classes $\{[x]: x \in U \}$, 
the function $e(x) = [f(x)]$, and 
for every $n$-ary predicate letter we stipulate $\rho(R) = (R^\rho_1, R^\rho_{\frac{1}{2}})$, where

\begin{enumerate}
	\item[(A1)] $(e(x_1), \dots, e(x_n)) \in R^\rho_1$ iff $(f(x_1),\dots, f(x_n)) \in \underline{R^\sigma}$,
	\item[(A$\frac{1}{2}$)] $(e(x_1), \dots, e(x_n)) \in R^\rho_{\frac{1}{2}}$ iff $(f(x_1),\dots, f(x_n)) \in \overline{R^\sigma}$.	
\end{enumerate}
\end{defi}

It is possible to prove the following fact.

\begin{prop} \label{ItoK}

Let $\mathcal{I} = (U,\theta, \sigma, f)$ be a rough interpretation and $(K, \leq, U_{_\mathcal{I}}, \rho, e)$ its associated Kripke model.
For every formula $\alpha$ and every valuation $v$ it holds that

\

$v_{_\mathcal{I}}(\alpha) = 1$ iff $1 \Vdash \alpha$ \ and \ 
$\frac{1}{2} \leq v_{_\mathcal{I}}(\alpha)$ iff $\frac{1}{2} \Vdash \alpha$. 
\end{prop}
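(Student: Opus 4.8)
The plan is to argue by induction on the complexity of the formula $\alpha$, proving the two biconditionals simultaneously. It is convenient to reformulate them as a single statement about the relationship between $v_{_\mathcal{I}}(\alpha)$ and the forcing behaviour of the two-node model: namely, $1 \Vdash \alpha$ exactly captures ``$v_{_\mathcal{I}}(\alpha) = 1$'' and $\tfrac{1}{2} \Vdash \alpha$ exactly captures ``$v_{_\mathcal{I}}(\alpha) \geq \tfrac{1}{2}$''. Since the frame is persistent ($R^\rho_1 \subseteq R^\rho_{\frac{1}{2}}$, which holds by (A1), (A$\tfrac12$) and the inclusion $\underline{R^\sigma} \subseteq \overline{R^\sigma}$), we automatically have $1 \Vdash \alpha$ implies $\tfrac12 \Vdash \alpha$, matching the fact that $v_{_\mathcal{I}}(\alpha)=1$ implies $v_{_\mathcal{I}}(\alpha)\geq\tfrac12$; so the three possible ``profiles'' of a formula at a point are $\{1,\tfrac12\}$-forced (value $1$), only $\tfrac12$-forced (value $\tfrac12$), and nowhere forced (value $0$).

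First I would do the base case. For an atomic formula $R(x_1,\dots,x_n)$, write $\bar a = (f(x_1),\dots,f(x_n))$ and $[\bar a] = (e(x_1),\dots,e(x_n))$. By the definition of $v_{_\mathcal I}$ on atoms together with (A1) and (A$\tfrac12$): $v_{_\mathcal I}(R)=1 \iff \bar a\in\underline{R^\sigma} \iff [\bar a]\in R^\rho_1 \iff 1\Vdash R$; and $v_{_\mathcal I}(R)\geq\tfrac12 \iff \bar a\in\overline{R^\sigma} \iff [\bar a]\in R^\rho_{\frac12} \iff \tfrac12\Vdash R$. Then I would handle the propositional connectives. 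For $\wedge$ and $\vee$ this is a routine case analysis: in the three-element chain $\mathbf 3$, $\min$ and $\max$ interact with the cut ``$\geq\tfrac12$'' and ``$=1$'' exactly as ``and''/``or'' do in the forcing clauses, so both biconditionals transfer directly from the induction hypothesis. For $\neg\alpha$: $1\Vdash\neg\alpha$ iff $\alpha$ is forced at no node $\geq 1$, i.e.\ not at $1$, which (by IH, since $\tfrac12\not\geq 1$ in this frame — careful: the nodes $\geq 1$ are just $1$ itself since $1$ is the top) means $v_{_\mathcal I}(\alpha)\neq 1$... here I must be attentive to the orientation of the order $\{1<\tfrac12\}$, so that $1$ is the \emph{least} node and $\tfrac12$ the greatest; thus nodes above $1$ are $\{1,\tfrac12\}$ and nodes above $\tfrac12$ are $\{\tfrac12\}$. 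With that fixed, $1\Vdash\neg\alpha$ iff $\alpha$ is forced nowhere iff $v_{_\mathcal I}(\alpha)=0$ iff $\neg v_{_\mathcal I}(\alpha)=1$, and $\tfrac12\Vdash\neg\alpha$ iff $\tfrac12\not\Vdash\alpha$ iff $v_{_\mathcal I}(\alpha)<\tfrac12$ iff $\neg v_{_\mathcal I}(\alpha)=1\geq\tfrac12$; both match the computation of $\neg$ in $\mathbf 3$. Dually, $1\Vdash D\alpha$ iff there is a node $\leq 1$, i.e.\ the single node $1$, with $1\not\Vdash\alpha$, i.e.\ $v_{_\mathcal I}(\alpha)\neq 1$, which in $\mathbf 3$ is exactly $D v_{_\mathcal I}(\alpha)=1$... again I must recheck: the nodes $\leq 1$ are $\{1\}$ and nodes $\leq\tfrac12$ are $\{1,\tfrac12\}$; so $\tfrac12\Vdash D\alpha$ iff $1\not\Vdash\alpha$ or $\tfrac12\not\Vdash\alpha$, i.e.\ $v_{_\mathcal I}(\alpha)\neq 1$, i.e.\ $D v_{_\mathcal I}(\alpha)\geq\tfrac12$, while $1\Vdash D\alpha$ iff $1\not\Vdash\alpha$ iff $v_{_\mathcal I}(\alpha)<1$... wait, this is too weak — in $\mathbf 3$, $Dx=1$ only when $x=0$. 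Let me be more careful here, as this is precisely the delicate point.

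The main obstacle, then, is getting the two modal clauses ($\neg$ and $D$) and the orientation of the two-element frame exactly right, because the forcing clauses for $\neg$ and $D$ quantify over different directions ($\geq k$ versus $\leq k$) and the three-valued truth functions $\neg$ and $D$ in $\mathbf 3$ are genuinely asymmetric ($\neg x = 1 \iff x = 0$ but also $\neg$ collapses $\tfrac12$ and $1$ to... no: $\neg\tfrac12 = 0 = \neg 1$; whereas $D\tfrac12 = 1 = D 0$ but $D 1 = 0$). A clean way to manage this is: note that in $\mathbf 3$ one has $\neg x = 1$ iff $x=0$, and $\neg x = 0$ iff $x\geq\tfrac12$; dually $Dx=0$ iff $x=1$, and $Dx=1$ iff $x\leq\tfrac12$, i.e.\ iff $x\neq 1$. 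Combined with the IH phrased as ``$1\Vdash\alpha \iff v_{_\mathcal I}(\alpha)=1$'' and ``$\tfrac12\Vdash\alpha \iff v_{_\mathcal I}(\alpha)\neq 0$'', and with the frame oriented so that $\{k' : k'\geq 1\}=\{1,\tfrac12\}$ and $\{k':k'\leq\tfrac12\}=\{1,\tfrac12\}$ while $\{k':k'\geq\tfrac12\}=\{\tfrac12\}$ and $\{k':k'\leq 1\}=\{1\}$, each of the four required equivalences for $\neg\alpha$ and $D\alpha$ reduces to one of these $\mathbf 3$-facts; I would present this as a short explicit table. Finally the quantifier cases: $1\Vdash\forall x\alpha$ iff for every node $k'\geq 1$ and every $[a]\in U_{_\mathcal I}$, $k'\Vdash_{e^{[a]/x}}\alpha$, which by IH applied to each variant interpretation $\mathcal I^{a/x}$ (whose associated Kripke model is exactly $K_{_\mathcal I}$ with assignment $e^{[a]/x}$) says $v_{_{\mathcal I^{a/x}}}(\alpha)=1$ for all $a$ — note every $U_{_\mathcal I}$-element is $[a]$ for some $a\in U$, so ranging over $a\in U$ suffices — i.e.\ $\min\{v_{_{\mathcal I^{a/x}}}(\alpha)\}=1$, i.e.\ $v_{_\mathcal I}(\forall x\alpha)=1$; similarly for $\tfrac12\Vdash\forall x\alpha$ and the dual clauses for $\exists$, using $\max$. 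One small subtlety to flag: well-definedness of $\rho$ on equivalence classes (that $(e(x_1),\dots)\in R^\rho_i$ does not depend on the choice of representatives) follows since $\theta$ respects $\underline{R^\sigma}$ and $\overline{R^\sigma}$ as unions of $\theta$-classes; I would remark on this before the induction.
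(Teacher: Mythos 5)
Your proposal is correct and follows essentially the same route as the paper's own proof: induction on formula complexity, carrying both biconditionals ($v_{_\mathcal{I}}(\alpha)=1 \leftrightarrow 1\Vdash\alpha$ and $v_{_\mathcal{I}}(\alpha)\geq\tfrac12 \leftrightarrow \tfrac12\Vdash\alpha$) simultaneously, with atoms handled by (A1)/(A$\tfrac12$), the connectives by the arithmetic of $\mathbf 3$, and the quantifiers by $\min$/$\max$ over variant interpretations. Your mid-proof alarm about $D$ is a false one --- in $\mathbf 3$ the join-complement satisfies $Dx=1$ iff $x\neq 1$, exactly as in your corrected table --- and your added remarks on the orientation of $\{1<\tfrac12\}$ and on well-definedness of $\rho$ on $\theta$-classes are sound points that the paper leaves implicit.
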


\begin{proof}
We check the cases of the atomic formulas, some connectives and the universal quantifier, leaving the rest for the reader.

$1 \Vdash R(x_1,\dots,x_n)$ iff 
$(e(x_1),\dots, e(x_n)) \in R^\rho_1$ if and only if  
$(f(x_1),\dots, f(x_n)) \in \underline{R^\sigma}$ if and only if $v_{_\mathcal{I}}(R(x_1,\dots,x_n)) = 1$.

$\frac{1}{2} \Vdash R(x_1,\dots, x_n)$ iff 
$(e(x_1),\dots, e(x_n)) \in R^\rho_\frac{1}{2}$ if and only if 
$(f(x_1),\dots, f(x_n)) \in \overline{R^\sigma}$ if and only if
$\frac{1}{2} \leq v_{_\mathcal{I}}(R(x_1,\dots,x_n))$.

Let us now suppose that the proposition holds for $\alpha$ and $\beta$. 
We have to prove that it holds for $\alpha \land \beta$. 
We have that $1 \leq v{_\mathcal{I}}(\alpha \land \beta)$ iff $1 \leq v{_\mathcal{I}}(\alpha) \land v{_\mathcal{I}}(\beta)$ iff $v{_\mathcal{I}}(\alpha) = 1$ and $v{_\mathcal{I}}(\beta) = 1$ iff $1 \Vdash \alpha$ and $1 \Vdash \beta$ iff $1 \Vdash \alpha \land \beta$. 

We also have that $\frac{1}{2} \leq v{_\mathcal{I}}(\alpha \land \beta)$ iff $\frac{1}{2} \leq v{_\mathcal{I}}(\alpha) \land v{_\mathcal{I}}(\beta)$ iff $\frac{1}{2} \leq v{_\mathcal{I}}(\alpha)$ and $\frac{1}{2} \leq v{_\mathcal{I}}(\beta)$ iff $\frac{1}{2} \Vdash \alpha$ and $\frac{1}{2} \Vdash \beta$ iff $\frac{1}{2} \Vdash \alpha \land \beta$.

Let us now suppose that the proposition holds for $\alpha$ and let us prove that it holds for $\neg \alpha$.

Since $v{_\mathcal{I}}(\neg \alpha) \neq \frac{1}{2}$, it is enough to note that $1 \leq v{_\mathcal{I}}(\neg \alpha)$ iff $v{_\mathcal{I}}(\alpha) = 0$ iff (by the inductive hypothesis) $1 \nVdash \alpha$ and $\frac{1}{2} \nVdash \alpha$ iff $1 \Vdash \neg \alpha$.

Since $v{_\mathcal{I}}(D \alpha) \neq \frac{1}{2}$, it is enough to note that $1 \leq v{_\mathcal{I}}(D \alpha)$ iff $v{_\mathcal{I}}(\alpha) = 0$ or $v{_\mathcal{I}}(\alpha) = \frac{1}{2}$ iff $1 \nVdash \alpha$ iff $1 \Vdash D\alpha$. 

Let us now suppose that the proposition holds for $\alpha$ and prove that it holds for $\forall x \alpha$.

Firstly, $v{_\mathcal{I}}(\forall x\alpha) = 1$ iff 
$min\{v_{_{\mathcal{I}^{a/x}}}(\alpha): a \in U \} =1$ iff 
for all $a \in U$, it holds that $v_{_{\mathcal{I}^{a/x}}}(\alpha) = 1$ iff 
for all $a \in U$, it holds that $v_{_{\mathcal{I}^{a/x}}}(\alpha) = 1$ and $\frac{1}{2} \leq v_{_{\mathcal{I}^{a/x}}}(\alpha)$ iff (by the inductive hypothesis) 
for all $a \in U$ it holds that $(1, e^{a/x}, \alpha) \in F$ and $(\frac{1}{2}, e^{a/x}, \alpha) \in F$ iff
$1 \Vdash \forall x \alpha$.

Secondly, $\frac{1}{2} \leq v{_\mathcal{I}}(\forall x\alpha)$ iff 
$\frac{1}{2} \leq min\{v_{_{\mathcal{I}^{a/x}}}(\alpha): a \in U \}$ iff 
for all $a \in U$, it holds that $\frac{1}{2} \leq v_{_{\mathcal{I}^{a/x}}}(\alpha)$ iff (by the inductive hypothesis)
for all $a \in U$ it holds that $(\frac{1}{2}, e^{a/x}, \alpha) \in F$ iff 
$\frac{1}{2} \Vdash \forall x \alpha$.
\end{proof}

\

Conversely, given a Kripke model of the form of those studied in \cite{MCTN}, we can associate a rough interpretation as follows.

\begin{defi}
Let ${\bf{K}} = (K, \leq, U, \rho, e)$ be a Kripke model with $(K, \leq) = \{1 < \frac{1}{2} \}$. 
We define the associated rough interpretation $\mathcal{I}_{\bf{K}}$ as follows.
\begin{enumerate}
	\item[] The universe of $\mathcal{I}_{\bf{K}}$ is the set $U' = U \times \{0,1\}$,
	\item[] relation $\theta$ is given by $(u,\varepsilon)\theta (v,\varepsilon')$ iff $u = v$ and 
	\item[] $f(x) = (e(x),0)$.	
	\item[] To any $n$-ary predicate letter $R$ in $L$ we associate the relation $R^{\sigma} \in (U')^n$ given by 
	\begin{eqnarray*}
		R^{\sigma}:= &
		\left\{
		((e(x_1), 0), \dots, (e(x_n), 0)) : (e(x_1), \dots, e(x_n)) \in R^\rho_\frac{1}{2}
		\right\} 
		\cup \ \ \ \ \ \ \ \ \ \ \ \ \ \ \ \ \ \ \ \ \ \ \ \ \ \ \ \ \ \ \ \ \ \ \ \ \ \ \ \ \ \ \  \\
		& \bigl\{
		((e(x_1),\varepsilon_1), \dots, (e(x_n),\varepsilon_n)) : (e(x_1), \dots, e(x_n)) \in R^\rho_1 \textrm{ and } \varepsilon_i \in \{0,1\} \textrm{ for } i \in \{1, \dots, n \}
		\bigr\}.
	\end{eqnarray*}
\end{enumerate}
\end{defi}

We can now prove the converse of Proposition \ref{ItoK}, that is, the following fact.

\begin{prop} \label{KtoI}

Let $(K, \leq, U, \rho, e)$ be a Kripke model and $\mathcal{I_K} = (U',\theta, \sigma, f)$ its associated rough interpretation.
Then, for every formula $\alpha$ and every valuation $v$ it holds that

\

$1 \Vdash \alpha$ iff $v_{_{\mathcal{I}_{\bf{K}}}}(\alpha) = 1$  and $\frac{1}{2} \Vdash \alpha$ iff $\frac{1}{2} \leq v_{_{\mathcal{I}_{\bf{K}}}}(\alpha)$.
\end{prop}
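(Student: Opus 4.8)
The plan is to prove the two biconditionals simultaneously by induction on the complexity of $\alpha$, mirroring the proof of Proposition~\ref{ItoK} in the opposite direction. Observe first that, since $v_{_{\mathcal{I}_{\bf{K}}}}$ takes only the values $0,\frac{1}{2},1$, the pair of clauses ``$v_{_{\mathcal{I}_{\bf{K}}}}(\alpha)=1$ iff $1\Vdash\alpha$'' and ``$\frac{1}{2}\le v_{_{\mathcal{I}_{\bf{K}}}}(\alpha)$ iff $\frac{1}{2}\Vdash\alpha$'' determines $v_{_{\mathcal{I}_{\bf{K}}}}(\alpha)$ completely from the forcing at the two nodes of $\{1<\frac{1}{2}\}$, so it suffices to carry these two clauses through the induction. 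Because variant assignments occur in the quantifier clauses, I would state the induction hypothesis uniformly over all Kripke models ${\bf K}'=(K,\le,U,\rho,e')$ with $(K,\le)=\{1<\frac{1}{2}\}$, writing $\mathcal{I}_{{\bf K}'}$ for the associated rough interpretation, and I would record for later use that $1$ is the bottom node, so that the clauses defining $F$ specialize to: $(1,e',\neg\alpha)\in F$ iff $(1,e',\alpha)\notin F$ and $(\frac{1}{2},e',\alpha)\notin F$; $(\frac{1}{2},e',\neg\alpha)\in F$ iff $(\frac{1}{2},e',\alpha)\notin F$; $(1,e',D\alpha)\in F$ iff $(1,e',\alpha)\notin F$; and $(\frac{1}{2},e',D\alpha)\in F$ iff $(1,e',\alpha)\notin F$ or $(\frac{1}{2},e',\alpha)\notin F$.

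For the base case $\alpha=R(x_1,\dots,x_n)$ one has $(f(x_1),\dots,f(x_n))=((e(x_1),0),\dots,(e(x_n),0))$, whose $\theta^n$-class is $\{((e(x_1),\delta_1),\dots,(e(x_n),\delta_n)):\delta_i\in\{0,1\}\}$ and hence depends only on the first coordinates $e(x_i)$. The heart of this case is the following computation of the approximations of $R^\sigma$: the whole $\theta^n$-class lies inside $R^\sigma$ exactly when $(e(x_1),\dots,e(x_n))\in R^\rho_1$ (if it does, the class is exactly the corresponding ``$B$''-block of the definition of $R^\sigma$; if it does not, then any tuple of the class having a $1$ in some coordinate lies outside $R^\sigma$, since the ``$A$''-block contains only all-zero tuples and the ``$B$''-block contains only tuples coming from $R^\rho_1$); and the $\theta^n$-class meets $R^\sigma$ exactly when $(e(x_1),\dots,e(x_n))\in R^\rho_{\frac{1}{2}}$ (if it does, the all-zero tuple lies in the ``$A$''-block; if it does not, then by monotonicity it is not in $R^\rho_1$ either, so no tuple of the class lies in $R^\sigma$). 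Hence $(f(x_1),\dots,f(x_n))\in\underline{R^\sigma}$ iff $(e(x_1),\dots,e(x_n))\in R^\rho_1$ iff $1\Vdash R(x_1,\dots,x_n)$, and $(f(x_1),\dots,f(x_n))\in\overline{R^\sigma}$ iff $(e(x_1),\dots,e(x_n))\in R^\rho_{\frac{1}{2}}$ iff $\frac{1}{2}\Vdash R(x_1,\dots,x_n)$; reading off the three-case definition of $v_{_{\mathcal{I}_{\bf{K}}}}$ on atoms yields both clauses.

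The inductive steps for $\wedge$ and $\vee$ are immediate from the facts that in ${\bf{3}}$ one has $a\wedge b=1$ iff $a=b=1$, that $\frac{1}{2}\le a\wedge b$ iff $\frac{1}{2}\le a$ and $\frac{1}{2}\le b$, and dually for $\vee$, together with the matching clauses for $F$. For $\neg$ and $D$ I would use that $v_{_{\mathcal{I}_{\bf{K}}}}(\neg\alpha)$ and $v_{_{\mathcal{I}_{\bf{K}}}}(D\alpha)$ never equal $\frac{1}{2}$, that $\neg$ sends $0$ to $1$ and sends $\frac{1}{2},1$ to $0$, that $D$ sends $0,\frac{1}{2}$ to $1$ and sends $1$ to $0$, and that for $t\in{\bf{3}}$ the condition $\frac{1}{2}\not\le t$ is equivalent to $t=0$; combined with the specialized $F$-clauses listed above, the induction hypothesis for $\alpha$ turns directly into both clauses for $\neg\alpha$ and $D\alpha$.

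The step I expect to be the main obstacle is the quantifier case, the difficulty being of a \emph{bookkeeping} nature: the universe $U'=U\times\{0,1\}$ of $\mathcal{I}_{\bf{K}}$ is strictly larger than the universe $U$ over which the Kripke model quantifies. The resolution rests on the observation --- proved by a routine sub-induction using exactly the remark from the base case that $\theta^n$-classes depend only on first coordinates --- that the value of any formula under a rough interpretation with the approximation space and the $\sigma$ of $\mathcal{I}_{\bf{K}}$ depends on the assignment only through the first coordinates of the assigned elements. In particular $v_{_{(\mathcal{I}_{\bf{K}})^{(u,0)/x}}}(\alpha)=v_{_{(\mathcal{I}_{\bf{K}})^{(u,1)/x}}}(\alpha)$ for every $u\in U$, so the minimum (resp.\ maximum) over $a\in U'$ in the definition of $v_{_{\mathcal{I}_{\bf{K}}}}(\forall x\alpha)$ (resp.\ $v_{_{\mathcal{I}_{\bf{K}}}}(\exists x\alpha)$) equals the minimum (resp.\ maximum) over those $a$ of the form $(u,0)$ with $u\in U$; and one checks from the definitions that $(\mathcal{I}_{\bf{K}})^{(u,0)/x}$ is precisely the rough interpretation associated to the Kripke model $(K,\le,U,\rho,e^{u/x})$. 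Applying the induction hypothesis to $(K,\le,U,\rho,e^{u/x})$ for each $u\in U$, together with the forcing clauses for $\forall$ and $\exists$ specialized to the frame $\{1<\frac{1}{2}\}$, then gives both clauses for $\forall x\alpha$ and $\exists x\alpha$, completing the induction and hence the proof.
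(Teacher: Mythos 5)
Your proof is correct and follows essentially the same induction as the paper's, with the same case analysis on atoms, connectives, and quantifiers. The two points you single out for extra care---the explicit computation of $\underline{R^\sigma}$ and $\overline{R^\sigma}$ from the two blocks defining $R^\sigma$, and the observation that $v_{_{\mathcal{I}_{\bf{K}}}}$ is insensitive to the second coordinates of assigned elements (which justifies collapsing the minimum over $U'=U\times\{0,1\}$ to one over $U$ in the quantifier step)---are exactly the details the paper leaves implicit, and you supply them correctly.
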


\begin{proof}
We check the cases of the atomic formulas and the universal quantifier, leaving the rest for the reader.

Let us check it for the case that $\alpha = R(x_1,\dots, x_n)$. 

Firstly, it holds that $1 \Vdash R(x_1,\dots, x_n)$ if and only if
$(e(x_1), \dots, e(x_n)) \in R^{\rho}_1$ if and only if  
$((e(x_1),\varepsilon_1), \dots, (e(x_n),\varepsilon_n)) \in R^\sigma$, for all $\varepsilon_i \in \{0,1\}$ iff 
$(f(x_1), \dots, f(x_n)) \in \underline{R^\sigma}$ if and only if
$v_{_{\mathcal{I}_{\bf{K}}}}(R(x_1,\dots,x_n)) = 1$. 

Secondly, $\frac{1}{2} \Vdash R(x_1,\dots, x_n)$ iff 
$(e(x_1), \dots, e(x_n)) \in R^{\rho}_\frac{1}{2}$ iff 
$(f(x_1), \dots, f(x_n)) \in R^{\sigma}$ if and only if 
$(f(x_1), \dots, f(x_n)) \in \overline{R^{\sigma}}$ iff 
$\frac{1}{2} \leq v_{_{\mathcal{I}_{\bf{K}}}}(R(x_1,\dots,x_n))$.

Let us now suppose that the proposition holds for $\alpha$ and deduce that it holds for $\forall x \alpha$.

Firstly, $1 \Vdash \forall x \alpha$ iff 
$(1,e,\forall x \alpha) \in F$ iff
for all $a \in U$ we have that $(1, e^{a/x}, \alpha) \in F$ iff (by the inductive hypothesis)
for all $a \in U$, it holds that $v_{\mathcal{I}_K^{(a,\epsilon)/x}} (\alpha) = 1$, for all $\epsilon \in \{0,1\}$ iff 
min $\{ v_{\mathcal{I}_{\bf{K}}^{a/x}}(\alpha) : a \in U, \epsilon \in \{ 0,1 \} \} = 1$ iff $v_{_{\mathcal{I}_{\bf{K}}}}(\forall x \alpha) = 1$.

On the other hand, $\frac{1}{2} \Vdash \forall x \alpha$ iff 
$(\frac{1}{2},e,\forall x \alpha) \in F$ iff 
for all $a \in U$ we have that $(\frac{1}{2},e^{a/x}, \alpha) \in F$ iff (by the inductive hypothesis)
for all $a \in U$, it holds that $v_{I_K^{(a,0)/x}} (\alpha) = 1$ iff
for all $a \in U$, for all $\epsilon \in \{0,1\}$, $\frac{1}{2} \leq v_{I_K^{(a,\epsilon)/x}} (\alpha)$ iff
$\frac{1}{2} \leq$ min $\{ v_{I_K^{(a,\epsilon)/x}}(\alpha) : a \in U, \epsilon \in \{0,1\} \}$ iff $v_{_{\mathcal{I}_{\bf{K}}}}(\forall x \alpha) = \frac{1}{2}$.
\end{proof}

Finally, we get the following result.

\begin{theo} \label{BT}
$\Gamma \Vdash \alpha$ if and only if $\Gamma \vDash \alpha$.
\end{theo}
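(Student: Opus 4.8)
The plan is to reduce the rough-sets consequence relation $\vDash$ to the Kripke consequence relation $\Vdash$ using the back-and-forth constructions of Propositions \ref{ItoK} and \ref{KtoI}, so that Theorem \ref{BT} follows by chasing the two equivalences through the definitions of semantic consequence. Since the paper restricts attention to closed formulas, the assignment part of the structures plays no essential role in the definition of $\Vdash$ and $\vDash$, and we may quantify over all interpretations/models freely.

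For the forward direction, suppose $\Gamma \Vdash \alpha$ and let $\mathcal{I}$ be an arbitrary rough interpretation; we must show $\min\{v_{_\mathcal{I}}(\gamma): \gamma \in \Gamma\} \leq v_{_\mathcal{I}}(\alpha)$. Form the associated Kripke model $K_{_\mathcal{I}}$ of Proposition \ref{ItoK}. The key observation is that the value $v_{_\mathcal{I}}(\beta)$ is completely determined, for each formula $\beta$, by the pair of forcing facts ``$1 \Vdash \beta$'' and ``$\frac{1}{2} \Vdash \beta$'' in $K_{_\mathcal{I}}$: namely $v_{_\mathcal{I}}(\beta) = 1$ iff $1 \Vdash \beta$; $v_{_\mathcal{I}}(\beta) = \frac{1}{2}$ iff $\frac{1}{2} \Vdash \beta$ but $1 \nVdash \beta$; and $v_{_\mathcal{I}}(\beta) = 0$ iff $\frac{1}{2} \nVdash \beta$ (using persistence, $1 \leq \frac{1}{2}$ so $1 \Vdash \beta$ implies $\frac{1}{2} \Vdash \beta$, but note the Kripke frame here is $\{1 < \tfrac12\}$ so ``$\leq$'' is reversed from the numeric order—one must read the monotonicity condition in the Definition of $F$ carefully). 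Now do a case split on the value $m := \min\{v_{_\mathcal{I}}(\gamma)\}$. If $m = 0$ there is nothing to prove. If $m = 1$, then $v_{_\mathcal{I}}(\gamma) = 1$ for all $\gamma$, so $1 \Vdash \gamma$ for all $\gamma$, hence $1 \Vdash \alpha$, hence $v_{_\mathcal{I}}(\alpha) = 1 \geq m$. If $m = \frac{1}{2}$, then $v_{_\mathcal{I}}(\gamma) \geq \frac{1}{2}$ for all $\gamma$, so $\frac{1}{2} \Vdash \gamma$ for all $\gamma$ by Proposition \ref{ItoK}, hence $\frac{1}{2} \Vdash \alpha$, hence $\frac{1}{2} \leq v_{_\mathcal{I}}(\alpha)$, giving $v_{_\mathcal{I}}(\alpha) \geq m$. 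In all cases $m \leq v_{_\mathcal{I}}(\alpha)$, as required.

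For the converse direction, suppose $\Gamma \vDash \alpha$ and let $\mathbf{K} = (K, \leq, U, \rho, e)$ be an arbitrary Kripke model of the restricted shape (frame $\{1 < \frac12\}$, which by the remark in Section \ref{KS} suffices), and let $k \in K$ be a node with $k \Vdash \gamma$ for all $\gamma \in \Gamma$; we must show $k \Vdash \alpha$. Pass to the associated rough interpretation $\mathcal{I}_{\mathbf{K}}$ of Proposition \ref{KtoI}. By that proposition, $k \Vdash \beta$ translates into a statement about $v_{_{\mathcal{I}_{\mathbf{K}}}}(\beta)$: if $k = 1$ it says $v_{_{\mathcal{I}_{\mathbf{K}}}}(\beta) = 1$, and if $k = \frac12$ it says $\frac12 \leq v_{_{\mathcal{I}_{\mathbf{K}}}}(\beta)$. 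Again split on $k$. If $k = 1$: then $v_{_{\mathcal{I}_{\mathbf{K}}}}(\gamma) = 1$ for every $\gamma \in \Gamma$, so $\min\{v_{_{\mathcal{I}_{\mathbf{K}}}}(\gamma)\} = 1$, hence $\Gamma \vDash \alpha$ forces $v_{_{\mathcal{I}_{\mathbf{K}}}}(\alpha) = 1$, hence $1 \Vdash \alpha$ by Proposition \ref{KtoI}. If $k = \frac12$: then $\frac12 \leq v_{_{\mathcal{I}_{\mathbf{K}}}}(\gamma)$ for every $\gamma \in \Gamma$, so $\min\{v_{_{\mathcal{I}_{\mathbf{K}}}}(\gamma)\} \geq \frac12$, hence by $\Gamma \vDash \alpha$ we get $v_{_{\mathcal{I}_{\mathbf{K}}}}(\alpha) \geq \frac12$, hence $\frac12 \Vdash \alpha$ by Proposition \ref{KtoI}. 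Either way $k \Vdash \alpha$.

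The only genuinely delicate point—the main obstacle—is handling the empty-premise/degenerate cases and the orientation of the order. One must be sure that the translations of Propositions \ref{ItoK} and \ref{KtoI} hold for all nodes simultaneously (not just node $1$), which they do as stated, and that ``$\min$ over the empty set'' is treated as the top element $1$ so that $\Gamma = \emptyset$ reduces to validity on both sides; this matches the convention implicit in the Definitions of $\vDash$ and of Kripke-consequence. Also worth a sentence of care: in the Kripke semantics of this paper the frame is written $\{1 < \frac12\}$, so that $1$ is the $\leq$-least node and persistence propagates truth \emph{from} $1$ \emph{to} $\frac12$; the two halves of each of Propositions \ref{ItoK} and \ref{KtoI} are exactly designed so that ``$v = 1$'' corresponds to forcing at the smaller node $1$ and ``$v \geq \frac12$'' to forcing at $\frac12$, and the case analysis above uses nothing more. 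With those conventions pinned down, the theorem is immediate from the two propositions.
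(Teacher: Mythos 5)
Your proof is correct and follows essentially the same route as the paper: both directions reduce to Propositions \ref{ItoK} and \ref{KtoI} together with a case analysis on the three truth values (equivalently, the two nodes). The only cosmetic difference is that the paper argues contrapositively (from a countermodel on one side to a countermodel on the other) while you argue directly; the content is identical.
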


\begin{proof}
Suppose there is a rough interpretation $\mathcal{I}$ such that $v_\mathcal{I} (\alpha) \leq  v_\mathcal{I} (\gamma)$ for all $\gamma \in \Gamma$. 
Then either $v_\mathcal{I}(\alpha) = 0$ or $v_\mathcal{I}(\alpha) = \frac{1}{2}$.
If $v_\mathcal{I}(\alpha) = 0$, then $\frac{1}{2} \leq v_\mathcal{I}(\gamma)$, for all $\gamma \in \Gamma$ whence $\frac{1}{2} \Vdash \gamma$ , for all $\gamma \in \Gamma$. It also holds that $v_\mathcal{I}(\alpha) = 0$ implies that $\frac{1}{2} \nVdash \alpha$. 
If $v_\mathcal{I}(\alpha) = \frac{1}{2}$, then $1 \Vdash \gamma$ , for all $\gamma \in \Gamma$ and $1 \nVdash \alpha$. 

Conversely, suppose there is a Kripke model such that either $1 \vDash \gamma$ for all $\gamma \in \Gamma$ and $1 \nvDash \alpha$ or $\frac{1}{2} \vDash \gamma$ for all $\gamma \in \Gamma$ and $\frac{1}{2} \nvDash \alpha$. 
In the first case, by Proposition \ref{KtoI} it follows that there is an interpretation $\mathcal{I}$ such that $v_\mathcal{I} (\gamma) = 1$ for all $\gamma \in \Gamma$ and $v(\alpha) \leq \frac{1}{2}$. 
In the second case, by Proposition \ref{KtoI} it follows that there is an interpretation $\mathcal{I}$ such that $\frac{1}{2} \leq v_\mathcal{I} (\gamma)$ for all $\gamma \in \Gamma$ and $v(\alpha) = 0$. 
\end{proof}

\begin{cor}
The logic $ID_3$ is sound and complete relative to the rough sets semantics.
\end{cor}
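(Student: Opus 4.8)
The plan is to obtain the Corollary as an immediate combination of Theorem \ref{BT} with the soundness and completeness of {\bf ID$_3$} relative to the Kripke semantics of Section \ref{KS}, which was established in \cite{MCTN}. Concretely, write $\vdash$ for the syntactic consequence relation of {\bf ID$_3$} fixed in Section 2. The first step is to record the fact, quoted from \cite{MCTN}, that $\Gamma \vdash \alpha$ if and only if $\Gamma \Vdash \alpha$ for every set $\Gamma \cup \{\alpha\}$ of closed formulas; one should note \emph{en passant} that the Kripke models used there are precisely the two-element constant-domain models with $(K,\leq)=\{1<\frac{1}{2}\}$ to which Propositions \ref{ItoK} and \ref{KtoI} and Theorem \ref{BT} refer, so that the relation $\Vdash$ occurring in Theorem \ref{BT} is literally the same one.

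Then I would simply chain the two equivalences: by the cited result of \cite{MCTN}, $\Gamma\vdash\alpha$ iff $\Gamma\Vdash\alpha$, and by Theorem \ref{BT}, $\Gamma\Vdash\alpha$ iff $\Gamma\vDash\alpha$; hence $\Gamma\vdash\alpha$ iff $\Gamma\vDash\alpha$. The left-to-right direction of this last biconditional is soundness of {\bf ID$_3$} with respect to the rough sets semantics, and the right-to-left direction is completeness, which together are exactly the content of the Corollary.

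Since all the substantive work --- the passage between rough interpretations and Kripke models, and the inductive verification in Propositions \ref{ItoK} and \ref{KtoI} that forcing corresponds to the {\bf 3}-valuation --- has already been done and folded into Theorem \ref{BT}, no real obstacle remains. The only point requiring a little care is bookkeeping about the shape of the consequence relations: one must make sure that the definition of $\vDash$ (quantifying over all rough interpretations $\mathcal{I}$ with the condition $\min\{v_{\mathcal{I}}(\gamma):\gamma\in\Gamma\}\leq v_{\mathcal{I}}(\alpha)$) and the definition of $\Vdash$ match the hypotheses under which \cite{MCTN} proves Kripke completeness --- in particular the restriction to closed formulas and to constant-domain models justified at the beginning of Section \ref{KS}. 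Once that alignment is made explicit, the Corollary follows by transitivity of the biconditional.
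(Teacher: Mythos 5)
Your proposal is correct and follows exactly the paper's own argument: the Corollary is obtained by chaining the Kripke soundness and completeness result of \cite{MCTN} with the equivalence $\Gamma \Vdash \alpha$ iff $\Gamma \vDash \alpha$ of Theorem \ref{BT}. The extra bookkeeping you mention (matching the two-element constant-domain models and the restriction to closed formulas) is a reasonable point of care but does not change the route.
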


\begin{proof}
By Theorem \ref{BT} and the fact that in \cite{MCTN} we proved that $ID_3$ is sound and complete relative to the two-element Kripke models considered above.
\end{proof}


\end{document}